\begin{document}
\title{On 1-skeleton of the cut polytopes}
%
%
\author{Andrei V. Nikolaev\orcidID{0000-0003-4705-2409}}
\authorrunning{A.V. Nikolaev}
%
\institute{P.G. Demidov Yaroslavl State University, Yaroslavl, Russia
	\email{andrei.v.nikolaev@gmail.com}}
\maketitle              
\begin{abstract}
Given an undirected graph $G = (V,E)$, the cut polytope $\mathrm{CUT}(G)$ is defined as the convex hull of the incidence vectors of all cuts in $G$.	The 1-skeleton of $\mathrm{CUT}(G)$ is a graph whose vertex set is the vertex set of the polytope, and the edge set is the set of geometric edges or one-dimensional faces of the polytope. 

We study the diameter and the clique number of 1-skeleton of cut polytopes for several classes of graphs. These characteristics are of interest since they estimate the computational complexity of the max-cut problem for certain computational models and classes of algorithms.

It is established that while the diameter of the 1-skeleton of a cut polytope does not exceed $|V|-1$ for any connected graph, the clique number varies significantly depending on the class of graphs. For trees, cacti, and almost trees (2), the clique number is linear in the dimension, whereas for complete bipartite and $k$-partite graphs, it is superpolynomial.


\keywords{Max-cut problem \and Cut polytope \and 1-skeleton \and Diameter \and Clique number \and Connected component \and Chromatic number.}
\end{abstract}

\section{Introduction}

The 1-\textit{skeleton} of a polytope $P$ is a graph whose vertex set is the vertex set of $P$ and edge set is the set of geometric edges or one-dimensional faces of $P$.
We are interested in 1-skeletons of polytopes arising from linear and integer programming models since some of their characteristics help to estimate the time complexity for different computation models and classes of algorithms.

The \textit{diameter} of a graph $G$, denoted by $d(G)$, is the maximum edge distance between any pair of vertices.
The diameter of 1-skeleton is known as a lower bound on the number of iterations of the simplex method and similar algorithms.
Indeed, let the shortest path between a pair of vertices $\mathbf{u}$ and $\mathbf{v}$ of a polytope $P$ consist of $d(P)$ edges. If the simplex method chooses $\mathbf{u}$ as the initial solution, and the optimal solution is $\mathbf{v}$, then no matter how successfully the algorithm chooses the next adjacent vertex of the 1-skeleton, the number of iterations cannot be less than $d(P)$ (see, for example, Dantzig~\cite{Dantzig1991}).

The \textit{clique number} of a graph $G$, denoted by $\omega(G)$, is the number of vertices in the largest complete subgraph.
The clique number of 1-skeleton serves as a lower bound for computational complexity in a class of \textit{direct-type} algorithms based on linear decision trees.
The idea is that direct-type algorithms cannot, in the worst case, discard more than one of the pairwise adjacent solutions in one iteration.
This class includes, for example, some classical dynamic programming, branch-and-bound and other algorithms. See Bondarenko~\cite{Bondarenko1983,Bondarenko1993} for more details.
Besides, for all known cases, the clique number of 1-skeleton of a polytope is polynomial for polynomially solvable problems (Bondarenko and Nikolaev~\cite{Bondarenko2016,Bondarenko2018,Nikolaev2022}) and superpolynomial for intractable problems (Bondarenko et al.~\cite{Bondarenko1983,Bondarenko2017SPT,Bondarenko2017Bi}, Padberg~\cite{Padberg1989}, and Simanchev \cite{Simanchev2018}).

In this paper, we consider 1-skeletons of cut polytopes associated with the maximum cut problem for several classes of graphs, in particular for trees, cacti, almost trees (2), complete bipartite, and $k$-partite graphs.
The results of the research are summarized in Table~\ref{Table_pivot_1-skeleton_cut_polytopes}. New results are highlighted in bold.

\begin{table}[t]
	\centering
	\caption{Summary table of properties of the 1-skeleton of the cut polytope $\mathrm{CUT}(G)$ for some classes of graphs}
	\label{Table_pivot_1-skeleton_cut_polytopes}
		\begin{tabular}{c|c|c|c}
					& Max-cut & Graph & Clique \\
					& complexity & diameter & number \\ \hline
			\begin{tabular}{@{}c@{}} \\ ~Tree~ \\ ~\end{tabular}	& P~\cite{Hadlock1975,Orlova1972} & $|V|-1$~\cite{Neto2016} &  2~\cite{Neto2016} \\ \hline
			
			\begin{tabular}{@{}c@{}} \\ ~Cactus~ \\ ~\end{tabular}	& P~\cite{Hadlock1975,Orlova1972} & ~$\bm{\lfloor \frac{|V|}{2} \rfloor\leq d \leq |V|-1}$~ & ~$\bm{\leq 2^{\lceil\log_2\left(|E|+1\right)\rceil} \leq 2|E|}$~ \\ \hline
			
			\begin{tabular}{@{}c@{}} Almost~ \\ tree (2) \end{tabular} & P~\cite{Hadlock1975,Orlova1972} & $\bm{\lfloor \frac{|V|}{3} \rfloor\leq d \leq |V|-1}$ & $\bm{\leq 2^{\lceil\log_2\left(|E|\right)\rceil + 1} \leq 4|E|}$ \\ \hline
			
			\begin{tabular}{@{}c@{}} ~~Complete~~ \\ ~bipartite~ \\ graph \end{tabular}	& ~NP-hard~\cite{McCormick2003}~ & 2~\cite{Neto2016} & $\bm{\geq 2^{\min\{|V_1|,|V_2|\}-1}}$ \\ \hline
			
			\begin{tabular}{@{}c@{}} ~Complete~ \\ $k$-partite \\ graph\end{tabular} & NP-hard~\cite{Bodlaender1994} & ~$\bm{2}$ & $\bm{\geq 2^{\textbf{2nd largest}\{|V_1|,\ldots,|V_k|\} - 1}}$~ \\ \hline
			
			\begin{tabular}{@{}c@{}} Complete \\ graph \end{tabular} & NP-hard~\cite{Karp1972} & $1$~\cite{Barahona1986} & $2^{|V|-1}$~\cite{Barahona1986}
		\end{tabular}
\end{table}

\section{Cut polytopes}
We consider the classical max-cut problem in an undirected weighted graph.

\vspace{2mm}

\textbf{\textsc{Maximum cut problem (max-cut).}}

\textsc{Instance.} Given an undirected graph $G=(V,E)$ with an edge weight function $w:E \rightarrow \mathbb{Z}$.

\textsc{Question.} Find a subset of vertices $S \subseteq V$ such that the sum of the weights of the edges from $E$ with one endpoint in $S$ and another in $V \backslash S$ (\textit{cut}) is as large as possible.

\vspace{2mm}

Max-cut is a well-known NP-hard problem (Karp~\cite{Karp1972}, Garey and Johnson~\cite{Garey1979}) that arises, for example, in cluster analysis (Boros and Hammer~\cite{Boros1989}), the Ising model in statistical physics (Barahona et al.~\cite{Barahona1988}), the VLSI design (Chen at al.~\cite{Chen1983}), social network analysis (Agrawal et al.~\cite{Agrawal2003}), and the image segmentation (de Sousa at al.~\cite{deSousa2013}).

The cut polytopes were introduced by Barahona and Mahjoub in their seminal paper~\cite{Barahona1986}. 
Let $G=(V,E)$ be an undirected graph with node set $V$ and edge set $E$.
With each subset $S \subseteq V$ we associate the incidence $0/1-$vector $\mathbf {v}(S) \in \{0,1\}^{|E|}$:
\[
v(S)_{e} =
\begin{cases}
	1, &\text{if } e \in \delta(S),\\
	0, &\text{otherwise}.
\end{cases},
\]
where $\delta(S) \subseteq E$ is the \textit{cut-set} of edges with exactly one endpoint in $S$.

The \textit{cut polytope} $\mathrm{CUT}(G)$ is defined as the convex hull of all incidence vectors:
\[\mathrm{CUT}(G) = \operatorname{conv} {\{\mathbf{v} (S): S \subseteq V\}} \subset \mathbb{R}^{|E|}.\]

An example of constructing the cut polytope $\mathrm{CUT}(K_3)$ for the complete graph $K_3$ on $3$ vertices is shown in Fig.~\ref{Fig_CUT(K_3)}.

\begin{figure}[t]
	\centering
	\begin{tikzpicture}[scale=0.95]
		\node[circle,draw,inner sep=3pt,fill=blue!35] (A) at (-30:1) {};
		\node[circle,draw,inner sep=3pt,fill=blue!35] (B) at (90:1) {};
		\node[circle,draw,inner sep=3pt,fill=blue!35] (C) at (210:1) {};
		
		\draw (A) -- (B) -- (C) -- (A);
		
		\begin{scope}[xshift=3cm]
			\node[circle,draw,inner sep=3pt,fill=red!35] (A) at (-30:1) {};
			\node[circle,draw,inner sep=3pt,fill=blue!35] (B) at (90:1) {};
			\node[circle,draw,inner sep=3pt,fill=blue!35] (C) at (210:1) {};
			
			\draw (A) -- (B) -- (C) -- (A);
			
			\draw [thick,red] (A) -- (B);
			\draw [thick,red] (A) -- (C);
			
			\draw [purple,dashed,thick] (45:1.2) -- (255:1.2);
		\end{scope}

		\begin{scope}[xshift=6cm]
			\node[circle,draw,inner sep=3pt,fill=blue!35] (A) at (-30:1) {};
			\node[circle,draw,inner sep=3pt,fill=red!35] (B) at (90:1) {};
			\node[circle,draw,inner sep=3pt,fill=blue!35] (C) at (210:1) {};
			
			\draw (A) -- (B) -- (C) -- (A);
			
			\draw [thick,red] (B) -- (A);
			\draw [thick,red] (B) -- (C);
			
			\draw [purple,dashed,thick] (15:1.2) -- (165:1.2);
		\end{scope}

		\begin{scope}[xshift=9cm]
			\node[circle,draw,inner sep=3pt,fill=blue!35] (A) at (-30:1) {};
			\node[circle,draw,inner sep=3pt,fill=blue!35] (B) at (90:1) {};
			\node[circle,draw,inner sep=3pt,fill=red!35] (C) at (210:1) {};
			
			\draw (A) -- (B) -- (C) -- (A);
			
			\draw [thick,red] (C) -- (A);
			\draw [thick,red] (C) -- (B);
			
			\draw [purple,dashed,thick] (135:1.2) -- (285:1.2);
		\end{scope}
		
		\node at (4.5,-1.5) {$\mathrm{CUT}(K_3) = \operatorname{conv}\left\lbrace (0,0,0), (0,1,1), (1,0,1), (1,1,0) \right\rbrace$};
		
	\end{tikzpicture}
	\caption {An example of constructing a cut polytope for $K_3$}
	\label {Fig_CUT(K_3)}
\end{figure}
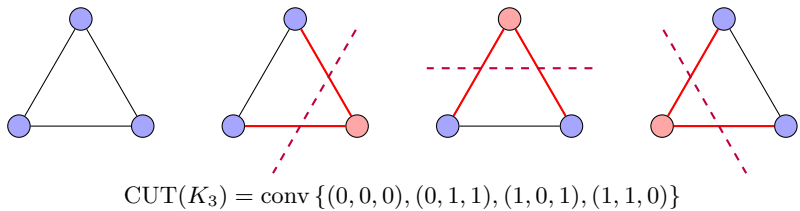

Note that since $\delta(S) = \delta (V \backslash S)$, then any graph $G = (V,E)$ contains exactly $2^{|V|-1}$ cuts, and the cut polytope $\mathrm{CUT}(G)$ will have the same number of vertices.

The cut polytope and its various relaxations often serve as the linear programming models for the maximum cut problem. See, for example, the polynomial time algorithm by Barahona for the max-cut on graphs not contractible to $K_5$~\cite{Barahona1983}. For more details, see the monograph by Deza and Laurent~\cite{DezaLaurent1997}.

\section{Adjacency}

The 1-skeletons of cut polytopes were first studied by Barahona and Mahjoub~\cite{Barahona1986}. In particular, they described a criterion for the adjacency of vertices in a cut polytope.

We denote by $X \triangle Y$ the symmetric difference of the sets $X$ and $Y$. 
It is easy to check that for any cuts $X,Y \subseteq V$ we have $\delta(X) \triangle \delta (Y) = \delta(X \triangle Y)$.

\begin{theorem}[Barahona and Mahjoub~\cite{Barahona1986}] \label{Theorem_Barahona_adjacency}
	Let $G = (V,E)$ be a connected graph. Let $\mathbf{v}(X)$ and $\mathbf{v}(Y)$ be extreme points of $\mathrm{CUT}(G)$, where $X,Y \subseteq V$ are the corresponding cuts. Then $\mathbf{v}(X)$ and $\mathbf{v}(Y)$ are adjacent in $\mathrm{CUT}(G)$ if and only if the graph $H_{X \triangle Y} = (V, E \backslash (\delta(X \triangle Y))$ has two connected components.
\end{theorem}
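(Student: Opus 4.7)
The plan is to combine the switching symmetry of $\mathrm{CUT}(G)$ with the standard midpoint criterion for adjacency of vertices of a polytope. First I would exploit the fact that, for any fixed $Y \subseteq V$, the affine involution that flips every coordinate indexed by $\delta(Y)$ sends $\mathbf{v}(Z)$ to $\mathbf{v}(Z \triangle Y)$, because $\delta(Z \triangle Y) = \delta(Z) \triangle \delta(Y)$. This map is an automorphism of $\mathrm{CUT}(G)$ that carries $\mathbf{v}(Y)$ to the origin and $\mathbf{v}(X)$ to $\mathbf{v}(X \triangle Y)$, while leaving the auxiliary graph $H_{X \triangle Y}$ unchanged. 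So it suffices to prove the theorem in the special case $Y = \emptyset$: for a nonempty proper $X \subsetneq V$, the vertices $\mathbf{0}$ and $\mathbf{v}(X)$ are adjacent iff $H_X = (V, E \setminus \delta(X))$ has exactly two connected components. (Since no edge of $H_X$ crosses the cut, the components of $H_X$ are always the components of $G[X]$ together with the components of $G[V \setminus X]$, so the two-component condition is the same as saying that both $G[X]$ and $G[V \setminus X]$ are connected.)

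Next I would invoke the criterion that $\mathbf{0}$ and $\mathbf{v}(X)$ are non-adjacent iff the midpoint $\tfrac{1}{2}\mathbf{v}(X)$ admits a convex decomposition $\sum_i \lambda_i \mathbf{v}(S_i)$ in which some vertex $\mathbf{v}(S_i)$ with $\lambda_i>0$ lies outside $\{\mathbf{0}, \mathbf{v}(X)\}$. The key structural observation is that any such $S_i$ must satisfy $\delta(S_i) \subseteq \delta(X)$: for an edge $e \notin \delta(X)$ the midpoint coordinate is $0$, and since each $v(S_i)_e \in \{0,1\}$ is nonnegative, every term on the right must vanish on $e$. Equivalently, each such $S_i$ is a union of connected components of $H_X$.

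With this lemma in hand the two directions follow at once. If $H_X$ has exactly the two components $X$ and $V \setminus X$, the only unions of components are $\emptyset, X, V \setminus X, V$, whose incidence vectors all equal $\mathbf{0}$ or $\mathbf{v}(X)$; no admissible ``outside'' vertex can appear in a midpoint decomposition, so $\mathbf{0}$ and $\mathbf{v}(X)$ are adjacent. For the converse, if $H_X$ has three or more components then, say, $G[X]$ decomposes as $A_1 \sqcup \cdots \sqcup A_p$ with $p \geq 2$. Because no edge of $G$ connects different $A_j$'s, the cut-sets $\delta(A_1)$ and $\delta(X \setminus A_1)$ are disjoint and their union is exactly $\delta(X)$, yielding
\[
\tfrac{1}{2}\mathbf{v}(X) \;=\; \tfrac{1}{2}\mathbf{v}(A_1) + \tfrac{1}{2}\mathbf{v}(X \setminus A_1),
\]
a proper decomposition of the midpoint whose two new vertices lie outside $\{\mathbf{0}, \mathbf{v}(X)\}$; the case in which $G[V \setminus X]$ is the disconnected side is symmetric.

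I expect the main obstacle to be largely a bookkeeping one: setting up the switching reduction cleanly and remembering, when applying the midpoint criterion, that the cuts $V$ and $V \setminus X$ yield the same two incidence vectors as $\emptyset$ and $X$ and must be excluded on the same footing. Once that is handled, the rest of the argument is essentially forced by the elementary identity $\delta(A) \triangle \delta(B) = \delta(A \triangle B)$ already noted above.
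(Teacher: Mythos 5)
Your proposal is correct, but it takes a genuinely different route from the paper in the ``two components $\Rightarrow$ adjacent'' direction. The paper certifies adjacency by explicitly constructing a linear objective $\mathbf{c}$ (supported on spanning trees of the two components of $H_{X \triangle Y}$, with signs $\pm 1$ chosen according to membership in $\delta(X) \cap \delta(Y)$) whose unique maximizers over $\mathrm{CUT}(G)$ are $\mathbf{v}(X)$ and $\mathbf{v}(Y)$. You instead use the switching involution $x_e \mapsto 1 - x_e$ on the coordinates of $\delta(Y)$ to reduce to $Y = \emptyset$, and then apply the midpoint criterion together with the support lemma that any vertex $\mathbf{v}(S)$ appearing in a convex decomposition of $\tfrac{1}{2}\mathbf{v}(X)$ must satisfy $\delta(S) \subseteq \delta(X)$, i.e.\ $S$ is a union of components of $H_X$; with only two components the candidate vertices collapse to $\{\mathbf{0}, \mathbf{v}(X)\}$ because $\delta(V \setminus X) = \delta(X)$ and $\delta(V) = \emptyset$. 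Your route avoids building a certifying functional and derives both directions from a single structural lemma, at the cost of invoking (correctly, and both are standard) the facts that switching is an affine automorphism of $\mathrm{CUT}(G)$ and that adjacency is equivalent to uniqueness of the midpoint decomposition. The non-adjacency direction is essentially the paper's argument in disguise: your identity $\tfrac{1}{2}\mathbf{v}(X) = \tfrac{1}{2}\mathbf{v}(A_1) + \tfrac{1}{2}\mathbf{v}(X \setminus A_1)$ is the switched form of the paper's $\mathbf{v}(X \triangle L) + \mathbf{v}(Y \triangle L) = \mathbf{v}(X) + \mathbf{v}(Y)$ with $L = A_1$, and your verification that $\delta(A_1)$ and $\delta(X \setminus A_1)$ are disjoint with union $\delta(X)$ is the point the paper leaves implicit.
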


A short summary of the proof is as follows.
Let $G_i = (V_i, E_i)$, $i = 1,2$ be two connected components of
$H_{X \triangle Y} = (V, E \backslash (\delta(X \triangle Y))$, and let $T_i$ be a spanning tree of $G_i$, $i = 1, 2$. Then we can consider vector $\mathbf{c} \in \mathbb{R}^{|E|}$:
\[
c_{e} = \begin{cases}
	1, &\text{if } e \in T_{i} \cap (\delta(X) \cap \delta(Y)),\ i=1,2,\\
	-1, &\text{if } e \in T_{i} \backslash (\delta(X) \cap \delta(Y)),\ i=1,2,\\
	0, &\text{otherwise},
\end{cases}
\]
such that $\mathbf{v}(X)$ and $\mathbf{v}(Y)$ are the only two extreme points
that maximize $\mathbf{c}^{T} \mathbf{v}$ over $\mathrm{CUT}(G)$ (see Fig.~\ref{Fig_example_adjacent_cuts}), therefore they are adjacent in the 1-skeleton.

\begin{figure}[t]
	\centering
	\begin{tikzpicture}[scale=1.0 ]
		\node[circle,draw,inner sep=3pt,fill=blue!35] (A) at (18:1) {};
		\node[circle,draw,inner sep=3pt,fill=blue!35] (B) at (90:1) {};
		\node[circle,draw,inner sep=3pt,fill=red!35] (C) at (162:1) {};
		\node[circle,draw,inner sep=3pt,fill=blue!35] (D) at (234:1) {};
		\node[circle,draw,inner sep=3pt,fill=blue!35] (E) at (306:1) {};
		
		\draw (A) -- (B) -- (C) -- (D) -- (E) -- (A);
		\draw (A) -- (C);
		\draw (B) -- (D);
		\draw (B) -- (E);
		
		\draw [thick,red] (C) -- (A);
		\draw [thick,red] (C) -- (B);
		\draw [thick,red] (C) -- (D);
		
		\node at (270:1.5) {cut $X$};
		

		\begin{scope}[xshift=3cm]
			\node[circle,draw,inner sep=3pt,fill=red!35] (A) at (18:1) {};
			\node[circle,draw,inner sep=3pt,fill=blue!35] (B) at (90:1) {};
			\node[circle,draw,inner sep=3pt,fill=blue!35] (C) at (162:1) {};
			\node[circle,draw,inner sep=3pt,fill=blue!35] (D) at (234:1) {};
			\node[circle,draw,inner sep=3pt,fill=blue!35] (E) at (306:1) {};
			
			\draw (A) -- (B) -- (C) -- (D) -- (E) -- (A);
			\draw (A) -- (C);
			\draw (B) -- (D);
			\draw (B) -- (E);
			
			\draw [thick,red] (A) -- (B);
			\draw [thick,red] (A) -- (C);
			\draw [thick,red] (A) -- (E);
			
			\node at (270:1.5) {cut $Y$};
		\end{scope}

		\begin{scope}[xshift=6cm]
			\node[circle,draw,inner sep=3pt,fill=red!35] (A) at (18:1) {};
			\node[circle,draw,inner sep=3pt,fill=blue!35] (B) at (90:1) {};
			\node[circle,draw,inner sep=3pt,fill=red!35] (C) at (162:1) {};
			\node[circle,draw,inner sep=3pt,fill=blue!35] (D) at (234:1) {};
			\node[circle,draw,inner sep=3pt,fill=blue!35] (E) at (306:1) {};
			
			\draw (A) -- (B) -- (C) -- (D) -- (E) -- (A);
			\draw (A) -- (C);
			\draw (B) -- (D);
			\draw (B) -- (E);
			
			\draw [thick,red] (A) -- (B);
			\draw [thick,red] (A) -- (E);
			\draw [thick,red] (C) -- (B);
			\draw [thick,red] (C) -- (D);
			
			\node at (270:1.5) {cut $X \triangle Y$};
		\end{scope}

		\begin{scope}[xshift=9cm]
			\node[circle,draw,inner sep=3pt,fill=red!35] (A) at (18:1) {};
			\node[circle,draw,inner sep=3pt,fill=blue!35] (B) at (90:1) {};
			\node[circle,draw,inner sep=3pt,fill=red!35] (C) at (162:1) {};
			\node[circle,draw,inner sep=3pt,fill=blue!35] (D) at (234:1) {};
			\node[circle,draw,inner sep=3pt,fill=blue!35] (E) at (306:1) {};
			
			\draw (A) -- node[below]{\scriptsize 1} (C);
			\draw (B) -- node[below left]{\scriptsize -1} (D);
			\draw (B) -- node[below right]{\scriptsize -1} (E);
			\draw (D) -- node[below]{\scriptsize 0} (E);
			
			\node at (270:1.5) {graph $H_{X \triangle Y}$};
		
		\end{scope}

	\end{tikzpicture}
	\caption {An example of adjacent cuts according to Theorem~\ref{Theorem_Barahona_adjacency}}
	\label {Fig_example_adjacent_cuts}
\end{figure}

Let now the graph $H_{X \triangle Y}$ contain more than two connected components (see Fig.~\ref{Fig_example_non_adjacent_cuts}), then at least one of these components defines a non-empty cut $L$, such that $\delta(L) \subset \delta(X \triangle Y)$, and
\[\mathbf{v}(X \triangle L) + \mathbf{v}(Y \triangle L) = \mathbf{v}(X) + \mathbf{v}(Y).\]
A line segment connecting vertices $\mathbf{v}(X)$ and $\mathbf{v}(Y)$ intersects a line segment connecting two other vertices, therefore, $\mathbf{v}(X)$ and $\mathbf{v}(Y)$ cannot be adjacent.

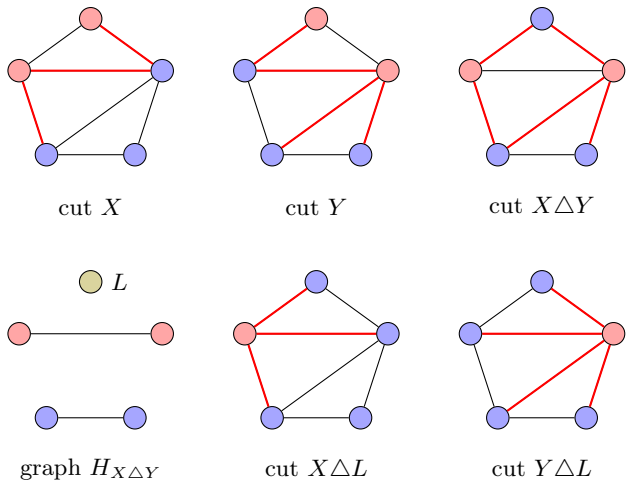
\begin{figure}[t]
	\centering
	\begin{tikzpicture}[scale=1.0 ]
		\node[circle,draw,inner sep=3pt,fill=blue!35] (A) at (18:1) {};
		\node[circle,draw,inner sep=3pt,fill=red!35] (B) at (90:1) {};
		\node[circle,draw,inner sep=3pt,fill=red!35] (C) at (162:1) {};
		\node[circle,draw,inner sep=3pt,fill=blue!35] (D) at (234:1) {};
		\node[circle,draw,inner sep=3pt,fill=blue!35] (E) at (306:1) {};
		
		\draw (A) -- (B) -- (C) -- (D) -- (E) -- (A);
		\draw (A) -- (C);
		\draw (A) -- (D);
		
		\draw [thick,red] (B) -- (A);
		\draw [thick,red] (C) -- (A);
		\draw [thick,red] (C) -- (D);
		
		\node at (270:1.5) {cut $X$};
		

		\begin{scope}[xshift=3cm]
			\node[circle,draw,inner sep=3pt,fill=red!35] (A) at (18:1) {};
			\node[circle,draw,inner sep=3pt,fill=red!35] (B) at (90:1) {};
			\node[circle,draw,inner sep=3pt,fill=blue!35] (C) at (162:1) {};
			\node[circle,draw,inner sep=3pt,fill=blue!35] (D) at (234:1) {};
			\node[circle,draw,inner sep=3pt,fill=blue!35] (E) at (306:1) {};
			
			\draw (A) -- (B) -- (C) -- (D) -- (E) -- (A);
			\draw (A) -- (C);
			\draw (A) -- (D);
			
			\draw [thick,red] (A) -- (C);
			\draw [thick,red] (A) -- (D);
			\draw [thick,red] (A) -- (E);
			\draw [thick,red] (B) -- (C);
			
			\node at (270:1.5) {cut $Y$};
		\end{scope}

		\begin{scope}[xshift=6cm]
			\node[circle,draw,inner sep=3pt,fill=red!35] (A) at (18:1) {};
			\node[circle,draw,inner sep=3pt,fill=blue!35] (B) at (90:1) {};
			\node[circle,draw,inner sep=3pt,fill=red!35] (C) at (162:1) {};
			\node[circle,draw,inner sep=3pt,fill=blue!35] (D) at (234:1) {};
			\node[circle,draw,inner sep=3pt,fill=blue!35] (E) at (306:1) {};
			
			\draw (A) -- (B) -- (C) -- (D) -- (E) -- (A);
			\draw (A) -- (C);
			\draw (A) -- (D);
			
			\draw [thick,red] (A) -- (B);
			\draw [thick,red] (A) -- (E);
			\draw [thick,red] (A) -- (D);
			\draw [thick,red] (C) -- (B);
			\draw [thick,red] (C) -- (D);
			
			\node at (270:1.5) {cut $X \triangle Y$};
		\end{scope}

		\begin{scope}[yshift=-3.5cm]
			\node[circle,draw,inner sep=3pt,fill=red!35] (A) at (18:1) {};
			\node[circle,draw,inner sep=3pt,fill=olive!35,label=right:{\small $L$}] (B) at (90:1) {};
			\node[circle,draw,inner sep=3pt,fill=red!35] (C) at (162:1) {};
			\node[circle,draw,inner sep=3pt,fill=blue!35] (D) at (234:1) {};
			\node[circle,draw,inner sep=3pt,fill=blue!35] (E) at (306:1) {};
			
			\draw (A) -- (C);
			\draw (D) -- (E);
			
			\node at (270:1.5) {graph $H_{X \triangle Y}$};
			
		\end{scope}
		
		\begin{scope}[yshift=-3.5cm, xshift=3cm]
			\node[circle,draw,inner sep=3pt,fill=blue!35] (A) at (18:1) {};
			\node[circle,draw,inner sep=3pt,fill=blue!35] (B) at (90:1) {};
			\node[circle,draw,inner sep=3pt,fill=red!35] (C) at (162:1) {};
			\node[circle,draw,inner sep=3pt,fill=blue!35] (D) at (234:1) {};
			\node[circle,draw,inner sep=3pt,fill=blue!35] (E) at (306:1) {};
			
			\draw (A) -- (B) -- (C) -- (D) -- (E) -- (A);
			\draw (A) -- (C);
			\draw (A) -- (D);
			
			\draw [thick,red] (C) -- (A);
			\draw [thick,red] (C) -- (B);
			\draw [thick,red] (C) -- (D);
			
			\node at (270:1.5) {cut $X \triangle L$};
			
		\end{scope}
		
		\begin{scope}[yshift=-3.5cm, xshift=6cm]
			\node[circle,draw,inner sep=3pt,fill=red!35] (A) at (18:1) {};
			\node[circle,draw,inner sep=3pt,fill=blue!35] (B) at (90:1) {};
			\node[circle,draw,inner sep=3pt,fill=blue!35] (C) at (162:1) {};
			\node[circle,draw,inner sep=3pt,fill=blue!35] (D) at (234:1) {};
			\node[circle,draw,inner sep=3pt,fill=blue!35] (E) at (306:1) {};
			
			\draw (A) -- (B) -- (C) -- (D) -- (E) -- (A);
			\draw (A) -- (C);
			\draw (A) -- (D);
			
			\draw [thick,red] (A) -- (B);
			\draw [thick,red] (A) -- (C);
			\draw [thick,red] (A) -- (D);
			\draw [thick,red] (A) -- (E);
			
			\node at (270:1.5) {cut $Y \triangle L$};
			
		\end{scope}
	\end{tikzpicture}
	\caption {An example of non-adjacent cuts according to Theorem~\ref{Theorem_Barahona_adjacency}}
	\label {Fig_example_non_adjacent_cuts}
\end{figure}

Theorem~\ref{Theorem_Barahona_adjacency} directly implies the following statement.
\begin{corollary}[Barahona and Mahjoub~\cite{Barahona1986}]
	If $G$ is a complete graph, then $\mathrm{CUT}(G)$ has diameter one.
\end{corollary}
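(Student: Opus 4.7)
The plan is to apply Theorem~\ref{Theorem_Barahona_adjacency} to an arbitrary pair of distinct vertices of $\mathrm{CUT}(G)$ and show the adjacency condition is automatically satisfied when $G$ is complete.

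First, I would pick two distinct extreme points $\mathbf{v}(X)$ and $\mathbf{v}(Y)$. Recalling that $\delta(S) = \delta(V \setminus S)$, distinctness of the incidence vectors means $X \neq Y$ and $X \neq V \setminus Y$, equivalently $Z := X \triangle Y$ is a nonempty proper subset of $V$.

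Next, I would describe the auxiliary graph $H_Z = (V, E \setminus \delta(Z))$ explicitly. Because $G$ is complete, $E \setminus \delta(Z)$ consists of all edges with both endpoints in $Z$ together with all edges with both endpoints in $V \setminus Z$. Thus $H_Z$ is the disjoint union of a complete graph on $Z$ and a complete graph on $V \setminus Z$. Since both vertex classes are nonempty and each induces a connected (complete) subgraph with no edges between them, $H_Z$ has exactly two connected components.

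Finally, Theorem~\ref{Theorem_Barahona_adjacency} gives immediately that $\mathbf{v}(X)$ and $\mathbf{v}(Y)$ are adjacent in $\mathrm{CUT}(G)$. Since the pair was arbitrary, the 1-skeleton is a complete graph and its diameter equals $1$. There is no real obstacle here; the only subtlety is the one-line observation that two distinct cut vectors correspond to a symmetric difference that is neither $\emptyset$ nor $V$, which is exactly what makes both components of $H_Z$ nonempty.
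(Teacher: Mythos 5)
Your proof is correct and follows exactly the route the paper intends: the paper gives no separate argument, stating only that the corollary follows directly from Theorem~\ref{Theorem_Barahona_adjacency}, and your verification that $H_{X\triangle Y}$ splits into the two nonempty complete graphs on $X\triangle Y$ and its complement is precisely the missing one-line check. Nothing to add.
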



The cut polytope of a complete graph is a classic example of a combinatorial polytope, any two vertices of which are pairwise adjacent. Note also that the clique number of the 1-skeleton is equal to $2^{|V|-1}$.

However, the situation may be fundamentally different for other classes of graphs. In this paper, we follow Neto~\cite{Neto2016}, who described the diameters of the 1-skeletons of cut polytopes for trees and complete bipartite graphs.

The study of cut polytopes for different classes of graphs is also of interest since the adjacency of vertices in the 1-skeleton is inherited from the subgraphs.

\begin{theorem}[Neto~\cite{Neto2016}] \label{Theorem_Neto_Subgraph}
	Let $G' = (V, E')$ be a connected subgraph of $G = (V, E)$, and let $X$, $Y$ denote two distinct node subsets of $V$. Then, if the vertices $\mathbf{v}(X)$ and $\mathbf{v}(Y)$ are adjacent in $\mathrm{CUT}(G')$, they are also adjacent in $\mathrm{CUT}(G)$.
\end{theorem}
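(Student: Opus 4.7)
My plan is to derive the conclusion directly from the Barahona--Mahjoub adjacency criterion (Theorem~\ref{Theorem_Barahona_adjacency}) by showing that its hypothesis behaves monotonely under adding edges outside the relevant cut. First I apply Theorem~\ref{Theorem_Barahona_adjacency} to $G'$: the assumed adjacency in $\mathrm{CUT}(G')$ tells me that $H'_{X \triangle Y} := (V, E' \setminus \delta(X \triangle Y))$ has exactly two connected components. The goal is to show that the analogous graph $H_{X \triangle Y} := (V, E \setminus \delta(X \triangle Y))$ for $G$ likewise has exactly two connected components, and then to invoke Theorem~\ref{Theorem_Barahona_adjacency} in the other direction.

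The key observation is that membership of an edge in the cut $\delta(X \triangle Y)$ depends only on its endpoints, not on the ambient graph. Hence, since $E' \subseteq E$, I have $E' \setminus \delta(X \triangle Y) \subseteq E \setminus \delta(X \triangle Y)$, so $H'_{X \triangle Y}$ is a spanning subgraph of $H_{X \triangle Y}$. Because adding edges to a graph can only merge components, $H_{X \triangle Y}$ has at most two connected components.

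For the matching lower bound I observe that every edge retained in $H_{X \triangle Y}$ has both endpoints on the same side of the bipartition $(X \triangle Y,\ V \setminus (X \triangle Y))$; since $\mathbf{v}(X)$ and $\mathbf{v}(Y)$ are distinct vertices of $\mathrm{CUT}(G)$, we have $X \triangle Y \neq \emptyset$ and $X \triangle Y \neq V$, so both sides are non-empty and $H_{X \triangle Y}$ has at least two components. Combining, $H_{X \triangle Y}$ has exactly two connected components, and Theorem~\ref{Theorem_Barahona_adjacency} applied to $G$ (which is connected, since it contains the connected spanning subgraph $G'$) yields adjacency of $\mathbf{v}(X)$ and $\mathbf{v}(Y)$ in $\mathrm{CUT}(G)$. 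No step here looks genuinely hard; the only bookkeeping to be careful about is that Theorem~\ref{Theorem_Barahona_adjacency} requires exactly two components rather than ``at least two'' or ``at most two'', which is why I verify the two inequalities separately.
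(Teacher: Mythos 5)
Your proposal is correct and follows essentially the same route as the paper: the paper's one-sentence argument observes that the two components of $H'_{X \triangle Y}$ remain connected inside $H_{X \triangle Y}$, which is exactly your ``at most two components'' direction, while your explicit check that the two sides of the bipartition $(X \triangle Y,\ V \setminus (X \triangle Y))$ cannot merge (since no retained edge crosses it) just makes precise what the paper leaves implicit. The extra care about ``exactly two'' versus ``at least/at most two'' is a welcome tightening but not a different proof.
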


Indeed, if the graph $H'_{X \triangle Y}$ for $G'$ has only 2 connected components, then these same components will remain connected in the graph $H_{X \triangle Y}$ for $G$.
Thus, the 1-skeleton of $\mathrm{CUT}(G')$ is a subgraph of the 1-skeleton of $\mathrm{CUT}(G)$.

\section{Trees, cacti, and almost trees}

In this section, we study cut polytopes for connected graphs with a small number of edges. In particular, trees, cacti, and almost trees (2).
Let us recall the definitions of the corresponding classes of graphs, following Brandst\"{a}dt et al.~\cite{Brandstadt1999}:
\begin{itemize}
	\item a \textit{tree} is a connected graph without cycles;
	\item a \textit{cactus} is a connected graph in which every edge belongs to at most one simple cycle;
	\item a connected graph is an \textit{almost tree} ($k$) if every \textit{biconnected component} (maximal subgraph that remains connected when any vertex is removed) has the property that there are at most $k$ edges not in a spanning tree of this biconnected component.
\end{itemize}

It's easy to see that $G$ is a cactus if $G$ is an almost tree (1). 
Some problems that are NP-hard for general graphs are polynomially solvable on almost trees with small parameter $k$ (see Ben-Moshe et al.~\cite{Ben-Moshe2005}, Coppersmith and Vishkin~\cite{Coppersmith1985}, Gurevich et al.~\cite{Gurevich1984}).
As for the max-cut problem, trees, cacti, and almost trees (2) are planar graphs, and it is well known that the maximum cut in a planar graph can be found in polynomial time (Orlova and Dorfman~\cite{Orlova1972}, Hadlock~\cite{Hadlock1975}), and, in general, on graphs not contractible to $K_5$ (Barahona~\cite{Barahona1983}).

\subsection{Trees}

Cut polytopes for trees were first considered by Neto~\cite{Neto2016} as an example of graphs with the largest possible value of the diameter of the corresponding cut polytopes. Here we provide an alternative proof of this result.

The \textit{Hamming distance} between two equal-length strings of symbols $\mathbf{x}$ and $\mathbf{y}$, denoted by $\Delta(\mathbf{x},\mathbf{y})$, is defined as the number of positions at which the entries of $\mathbf{x}$ and $\mathbf{y}$ are different.

\begin{theorem}[Neto~\cite{Neto2016}]
	If $G=(V,E)$ is a tree, then the cut polytope $\mathrm{CUT}(G)$ is a unit hypercube $\left[ 0, 1\right]^{|V|-1}$.
\end{theorem}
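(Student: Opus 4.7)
The plan is to show that $\mathrm{CUT}(G)$ coincides with the hypercube by verifying both containments, using the fact that a tree has exactly $|V|-1$ edges and, more importantly, no cycles, so there are no parity obstructions to realizing an arbitrary $\{0,1\}$-assignment on edges by a cut.

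First I would observe that since $G$ is a tree, $|E| = |V|-1$, so by construction $\mathrm{CUT}(G) \subseteq [0,1]^{|E|} = [0,1]^{|V|-1}$; this gives one containment for free. The substantive step is the reverse containment, which reduces to showing that every $0/1$ vector $\mathbf{b} \in \{0,1\}^{|V|-1}$ is the incidence vector of some cut. For this, fix any root $r \in V$ and put $r \in S$. For every other vertex $u \in V$, the tree contains a unique path $P_{ru}$ from $r$ to $u$; declare $u \in S$ if and only if the number of edges $e \in P_{ru}$ with $b_{e} = 1$ is odd. I would then check directly that for every edge $e = uv \in E$ the endpoints lie on opposite sides of $S$ precisely when $b_{e}=1$, so $\mathbf{v}(S) = \mathbf{b}$; here acyclicity is exactly what guarantees the construction is well-defined, since there is no closed walk along which the parities might conflict.

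Combining the two containments gives
\[
\mathrm{CUT}(G) = \operatorname{conv}\{\mathbf{v}(S) : S \subseteq V\} = \operatorname{conv}\{0,1\}^{|V|-1} = [0,1]^{|V|-1},
\]
and a sanity check on vertex counts matches: the tree has $2^{|V|-1}$ distinct cuts and the hypercube has $2^{|V|-1}$ vertices, so no collapsing occurs.

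The main obstacle is essentially bookkeeping in the realization step, specifically the verification that the parity-based coloring really does produce $\mathbf{v}(S) = \mathbf{b}$ edgewise; everything else is immediate from dimensions and from the definition of the cut polytope. As a sanity check compatible with Theorem~\ref{Theorem_Barahona_adjacency}, two cuts $\mathbf{v}(X), \mathbf{v}(Y)$ with $|\delta(X \triangle Y)| = k$ correspond to removing $k$ edges from the tree, producing $k+1$ components, so adjacency occurs iff $k=1$, matching the hypercube 1-skeleton; this is consistent with the stronger geometric identification and will be the natural entry point for the subsequent diameter argument using the Hamming distance.
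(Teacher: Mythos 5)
Your proof is correct in substance but takes a genuinely different route from the paper. The paper argues via adjacency and counting: removing $k$ edges from a tree leaves $k+1$ components, so by Theorem~\ref{Theorem_Barahona_adjacency} two cut vectors are adjacent iff their Hamming distance is $1$; combined with the fact that the $2^{|V|-1}$ cuts give $2^{|V|-1}$ distinct $0/1$ vectors in $\{0,1\}^{|V|-1}$, this identifies the polytope as the hypercube. You instead prove the stronger and more self-contained fact that \emph{every} vector $\mathbf{b}\in\{0,1\}^{|V|-1}$ is realized as $\mathbf{v}(S)$ for some cut $S$ (equivalently, the $\mathrm{GF}(2)$ cut space of a tree is all of $\{0,1\}^{|E|}$), from which $\mathrm{CUT}(G)=\operatorname{conv}\{0,1\}^{|V|-1}=[0,1]^{|V|-1}$ follows without any appeal to the adjacency criterion. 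Your approach buys a direct geometric identification that does not depend on Theorem~\ref{Theorem_Barahona_adjacency} (which you then recover as a consistency check), while the paper's approach is shorter given that the adjacency machinery is already set up and feeds directly into the diameter corollary. One small slip to fix: you declare $r\in S$ \emph{and} declare $u\in S$ iff the path $P_{ru}$ has an odd number of edges with $b_e=1$; the empty path has even parity, so these two conventions disagree at $r$, and for an edge $rv$ incident to the root your rule then puts $rv\in\delta(S)$ exactly when $b_{rv}=0$. Drop the stipulation $r\in S$ and let the parity rule govern all vertices (so $r\notin S$); then for any edge $e=uv$ the parities of $P_{ru}$ and $P_{rv}$ differ iff $b_e=1$, and $\mathbf{v}(S)=\mathbf{b}$ as intended.
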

	
\begin{proof}
	Note that if we remove $k$ edges from the tree, then it splits into $k-1$ connected components. Therefore, for some two distinct cuts $X$ and $Y$ the graph $H_{X \triangle Y}$ has 2 connected components if and only if $|\delta(X \triangle Y)| = 1$. Thus, the incidence vectors of cuts in a tree are $0/1$ vectors of length $|E|=|V|-1$, and two vectors are adjacent if and only if the Hamming distance between them is exactly $1$. Clearly, this is a unit hypercube $\left[0, 1\right]^{|V|-1}$.	
\end{proof}

\begin{corollary}
	If $G = (V,E)$ is a tree, then the diameter $d(\mathrm{CUT}(G)) = |V|-1$, and the clique number  $\omega(\mathrm{CUT}(G)) = 2$.
\end{corollary}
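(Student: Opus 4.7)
The plan is to derive both equalities directly from the preceding theorem, which identifies $\mathrm{CUT}(G)$ with the unit hypercube $[0,1]^{|V|-1}$ when $G$ is a tree. Since the 1-skeleton of a polytope is a combinatorial invariant, it suffices to compute the diameter and clique number of the 1-skeleton of the unit hypercube $Q_n$ with $n = |V|-1$, whose vertices are all $0/1$ strings of length $n$ and whose edges connect pairs at Hamming distance exactly one.

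For the diameter, I would first observe that any two vertices $\mathbf{x}, \mathbf{y} \in \{0,1\}^n$ can be connected by flipping the coordinates in which they differ, one at a time, giving a path of length $\Delta(\mathbf{x}, \mathbf{y}) \le n$. Conversely, every edge of the 1-skeleton changes the Hamming distance by exactly one, so any path from $\mathbf{x}$ to $\mathbf{y}$ has length at least $\Delta(\mathbf{x}, \mathbf{y})$. Taking $\mathbf{x} = (0,\ldots,0)$ and $\mathbf{y} = (1,\ldots,1)$ yields $\Delta(\mathbf{x},\mathbf{y}) = n = |V|-1$, which matches the upper bound and proves $d(\mathrm{CUT}(G)) = |V|-1$.

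For the clique number, I would note that $Q_n$ is bipartite: partitioning vertices by the parity of the sum of their coordinates gives a proper $2$-coloring, since each edge changes exactly one coordinate and thus flips the parity. Bipartite graphs contain no odd cycles, in particular no triangles, so $\omega \le 2$. Since $\mathrm{CUT}(G)$ has at least one edge (for any tree with $|V| \ge 2$), we get $\omega(\mathrm{CUT}(G)) = 2$.

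There is essentially no obstacle here: the entire content of the statement is packaged into the identification $\mathrm{CUT}(G) \cong [0,1]^{|V|-1}$ provided by the preceding theorem, and the two facts about hypercubes are standard. The only mild care needed is to verify the edge structure of the hypercube agrees with the adjacency in $\mathrm{CUT}(G)$, but this is exactly what the proof of the theorem already established via Theorem~\ref{Theorem_Barahona_adjacency}.
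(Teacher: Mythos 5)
Your proposal is correct and follows exactly the route the paper intends: the corollary is stated without proof as an immediate consequence of the identification $\mathrm{CUT}(G) \cong [0,1]^{|V|-1}$, and your computation of the diameter (via Hamming distance) and clique number (via bipartiteness of the hypercube) of $Q_{|V|-1}$ supplies precisely the standard details being taken for granted.
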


Thus, taking into account Theorem~\ref{Theorem_Neto_Subgraph}, cut polytopes for trees, among all connected graphs, have the smallest clique number and the largest diameter of the 1-skeleton.

\subsection{Cacti}

Here we consider cacti as a simple generalization of trees (see Fig.~\ref{Fig_Cactus}).

\begin{figure}[t]
	\centering
	\begin{tikzpicture}[scale=1.0]
		\node[circle,draw,inner sep=3pt,fill=blue!35] (00) at (0,0) {};
		\node[circle,draw,inner sep=3pt,fill=blue!35] (-11) at (-1,1) {};
		\node[circle,draw,inner sep=3pt,fill=blue!35] (11) at (1,1) {};
		\node[circle,draw,inner sep=3pt,fill=blue!35] (-12) at (-1,2) {};
		\node[circle,draw,inner sep=3pt,fill=blue!35] (12) at (1,2) {};
		\node[circle,draw,inner sep=3pt,fill=blue!35] (-32) at (-3,2) {};
		\node[circle,draw,inner sep=3pt,fill=blue!35] (-22) at (-2,2) {};
		\node[circle,draw,inner sep=3pt,fill=blue!35] (22) at (2,2) {};
		\node[circle,draw,inner sep=3pt,fill=blue!35] (23) at (2,3) {};
		\node[circle,draw,inner sep=3pt,fill=blue!35] (-23) at (-2,3) {};
		\node[circle,draw,inner sep=3pt,fill=blue!35] (03) at (0,3) {};
		\node[circle,draw,inner sep=3pt,fill=blue!35] (13) at (1,3) {};
		\node[circle,draw,inner sep=3pt,fill=blue!35] (33) at (3,3) {};
		
		\draw (00) -- (11) -- (12) -- (03) -- (-12) -- (-11) -- (00);
		\draw (-11) -- (-22) -- (-23) -- (-32) -- (-11);
		\draw (11) -- (22);
		\draw (22) -- (23) -- (33) -- (22);
		\draw (12) -- (13);
		
	\end{tikzpicture}
	\caption {Example of a cactus graph}
	\label {Fig_Cactus}
\end{figure}
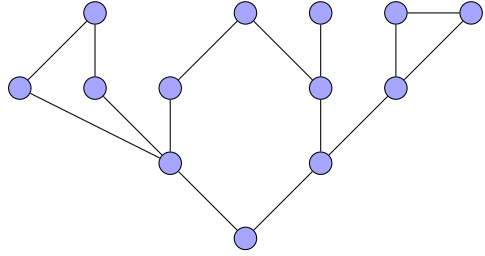

\begin{lemma} \label{Lemma_cactus_adjacency}
	Let $G = (V,E)$ be a cactus. If for two cuts $X, Y \subseteq V$ the vertices $\mathbf{v}(X)$ and $\mathbf{v}(Y)$ are adjacent in $\mathrm{CUT}(G)$, then $|\delta(X \triangle Y)| \leq 2$.
\end{lemma}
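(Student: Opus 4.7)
The plan is to combine the Barahona--Mahjoub adjacency criterion (Theorem~\ref{Theorem_Barahona_adjacency}) with two structural facts about a cactus: (i) every edge is either a bridge or lies on exactly one simple cycle, so the biconnected components are bridges and simple cycles; and (ii) for any $Z \subseteq V$, the cut-set $\delta(Z)$ must intersect every simple cycle in an even number of edges.

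First, I would set $Z = X \triangle Y$ and $F = \delta(Z)$. By Theorem~\ref{Theorem_Barahona_adjacency}, adjacency of $\mathbf{v}(X)$ and $\mathbf{v}(Y)$ is equivalent to the graph $H_Z = (V, E \setminus F)$ having exactly two connected components, so the goal reduces to showing that $|F| \leq 2$ whenever $H_Z$ has exactly two components.

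Next, I would perform a precise count of the components of $H_Z$ using the cactus structure. Let $b$ be the number of bridges contained in $F$, and for each simple cycle $C_i$ of $G$ let $c_i = |F \cap E(C_i)|$. Because an edge of $G$ is a bridge or belongs to exactly one simple cycle, $|F| = b + \sum_i c_i$. Removing a bridge disconnects one component into two, and removing $c_i$ edges from a simple cycle splits it into $c_i$ arcs if $c_i \geq 1$, contributing $c_i - 1$ new components. Therefore
\[
\#\text{components}(H_Z) \;=\; 1 + b + \sum_{i:\, c_i > 0} (c_i - 1).
\]
Setting this equal to $2$ yields the key equation $b + \sum_{i:\, c_i > 0} (c_i - 1) = 1$.

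Finally, I would invoke the parity fact: since $F = \delta(Z)$ is a cut-set, each nonzero $c_i$ is even, hence $c_i \geq 2$ and $c_i - 1 \geq 1$. The only ways the sum on the left can equal $1$ are: (a) $b = 1$ and no cycle is hit, giving $|F| = 1$; or (b) $b = 0$ and exactly one cycle is hit with $c_i = 2$, giving $|F| = 2$. Either way $|\delta(X \triangle Y)| \leq 2$. I do not anticipate a real obstacle here; the only subtlety is remembering to use the evenness of $c_i$, without which one might incorrectly conclude only that $|F| \leq $ number of cycles hit plus one.
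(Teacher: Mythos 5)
Your proof is correct and follows essentially the same route as the paper: both arguments rest on the cactus decomposition into bridges and edge-disjoint cycles, the fact that a cut-set meets each cycle in an even number of edges, and a count of the components of $H_{X\triangle Y}$ via Theorem~\ref{Theorem_Barahona_adjacency}. Your version merely makes the paper's component count explicit with the formula $1+b+\sum_{i:\,c_i>0}(c_i-1)$, which is a slightly more rigorous rendering of the same idea.
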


\begin{proof}
	Note that if $G = (V,E)$ is a cactus, then for any cut $S \subseteq  V$, where $|\delta(S)| \geq 3$, the graph $H_{S} = (V,E \backslash \delta(S))$ has at least three connected components.
	Indeed, each cactus edge is either a \textit{bridge} (an edge whose removal would disconnect the graph) or belongs to a unique cycle.
	Since each edge belongs to at most one simple cycle, any cut-set can contain either an even number of edges from some cycles or one or more bridges.
	However, removing any bridge, or any two edges from the cycle, breaks the cactus into different connected components.
	Thus, by Theorem~\ref{Theorem_Barahona_adjacency}, if the vertices $\mathbf{v}(X)$ and $\mathbf{v}(Y)$ are adjacent, then $\delta(X \triangle Y)$ contains at most two edges.
\end{proof}

\begin{theorem} \label{Theorem_cactus_diameter}
	Let $G=(V,E)$ be a cactus, then
	\[\left\lfloor \frac{|V|}{2} \right\rfloor \leq d(\mathrm{CUT}(G)) \leq |V|-1.\]
\end{theorem}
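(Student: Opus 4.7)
The plan is to treat the two inequalities separately, since each one isolates a different idea already laid out in the excerpt: the subgraph-monotonicity result of Theorem~\ref{Theorem_Neto_Subgraph} for the upper bound, and the adjacency criterion of Lemma~\ref{Lemma_cactus_adjacency} for the lower bound.

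For the upper bound, I would pass to a spanning tree $T$ of $G$. Since $T$ is a connected spanning subgraph, Theorem~\ref{Theorem_Neto_Subgraph} gives that the 1-skeleton of $\mathrm{CUT}(T)$ is a spanning subgraph of the 1-skeleton of $\mathrm{CUT}(G)$. By the preceding theorem on trees, $\mathrm{CUT}(T)$ is a unit hypercube $[0,1]^{|V|-1}$ whose 1-skeleton has diameter exactly $|V|-1$. Every path in $\mathrm{CUT}(T)$ is also a path in $\mathrm{CUT}(G)$, hence $d(\mathrm{CUT}(G)) \leq |V|-1$.

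For the lower bound, the strategy is to exhibit two cuts whose incidence vectors have large Hamming distance and then translate this into a lower bound on graph distance via Lemma~\ref{Lemma_cactus_adjacency}. Concretely, pick any spanning tree $T$ of $G$; since $T$ is bipartite, it has a proper 2-coloring, and I take $Z \subseteq V$ to be one of the color classes. Every edge of $T$ is cut by $Z$, so $T \subseteq \delta_G(Z)$ and therefore $|\delta_G(Z)| \geq |V|-1$. Set $X = \emptyset$ and $Y = Z$. Along any edge-path $\mathbf{v}(X) = \mathbf{v}(X_0), \mathbf{v}(X_1), \ldots, \mathbf{v}(X_d) = \mathbf{v}(Y)$ in the 1-skeleton of $\mathrm{CUT}(G)$, Lemma~\ref{Lemma_cactus_adjacency} gives $|\delta(X_i \triangle X_{i+1})| \leq 2$, so consecutive vectors differ in at most two coordinates. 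Summing along the path and using the identity $\delta(X)\triangle\delta(Y) = \delta(X \triangle Y)$, I obtain $|\delta(X\triangle Y)| \leq 2d$, hence
\[
d \;\geq\; \left\lceil \frac{|\delta(X \triangle Y)|}{2} \right\rceil \;\geq\; \left\lceil \frac{|V|-1}{2} \right\rceil \;=\; \left\lfloor \frac{|V|}{2} \right\rfloor.
\]

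The only real obstacle is producing a cut of size at least $|V|-1$ in a cactus without doing a case analysis over cycles and bridges. The spanning-tree bipartition trick sidesteps that analysis entirely and works uniformly for any connected $G$; the cactus hypothesis is then used exclusively through Lemma~\ref{Lemma_cactus_adjacency} to convert Hamming distance into edge distance with the factor of two. Checking the small arithmetic identity $\lceil (|V|-1)/2 \rceil = \lfloor |V|/2 \rfloor$ by parity of $|V|$ completes the argument.
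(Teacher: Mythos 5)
Your proposal is correct and follows essentially the same route as the paper: the upper bound via a spanning tree and Theorem~\ref{Theorem_Neto_Subgraph}, and the lower bound by taking a bipartition class of a spanning tree to get a cut of size at least $|V|-1$ and then applying Lemma~\ref{Lemma_cactus_adjacency} step by step along a path. Your write-up just makes explicit the summing argument and the arithmetic identity $\lceil (|V|-1)/2\rceil = \lfloor |V|/2\rfloor$ that the paper states tersely.
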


\begin{proof}
	An upper bound on the diameter, common to all connected graphs, is achieved on a tree, which is a special case of a cactus.
	
	For the lower bound, consider the spanning tree of the graph $G = (V,E)$. Since every tree is a bipartite graph, we can always construct a non-empty cut $S \subset V$ such that the cut set $\delta(S)$ contains at least $|V|-1$ edges. Therefore, by Lemma~\ref{Lemma_cactus_adjacency}, the shortest path between $\mathbf{v}(S)$ and the empty cut $\mathbf{v}(\emptyset)$ for the cactus graph has length of at least $\lceil \frac{V-1}{2} \rceil = \lfloor \frac{|V|}{2} \rfloor$.
	The exact value is achieved, for example, on any cycle graph.
	
\end{proof}

\begin{theorem} \label{Theorem_cactus_clique}
	Let $G=(V,E)$ be a cactus, then
	\[\omega(\mathrm{CUT}(G)) \leq 2^{\lceil\log_2\left(|E|+1\right)\rceil} \leq 2|E|.\]
\end{theorem}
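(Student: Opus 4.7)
The plan is to use Lemma~\ref{Lemma_cactus_adjacency} to reduce the problem to estimating the size of a binary code of diameter~$2$ in $\{0,1\}^{|E|}$ whose codewords are cuts of the cactus $G$, then to exploit the very restricted structure of weight-$1$ and weight-$2$ cuts in a cactus.

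First, the identity $\delta(X)\triangle\delta(Y)=\delta(X\triangle Y)$ noted before Theorem~\ref{Theorem_Barahona_adjacency} shows that the Hamming distance between cut vectors $\mathbf{v}(X)$ and $\mathbf{v}(Y)$ equals $|\delta(X\triangle Y)|$. By Lemma~\ref{Lemma_cactus_adjacency} this distance is at most $2$ whenever the two vertices are adjacent in the 1-skeleton, so any clique $K$ is a set of cut vectors at pairwise Hamming distance at most $2$. Since the map $\mathbf{v}(X)\mapsto\mathbf{v}(X\triangle X_{0})$ is a distance-preserving bijection on cut vectors, I would translate by a fixed clique member and assume $\mathbf{v}(\emptyset)\in K$; every other member of $K$ then corresponds to a cut-set of size $1$ or $2$.

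Next, I would use the cactus structure to characterize these low-weight cuts. Since each edge of a cactus is either a bridge or lies in a unique cycle, and every cut meets each cycle in an even number of edges, a size-$1$ cut is a single bridge, while a size-$2$ cut is either a pair of bridges or a pair of edges in one common cycle; bridge-plus-cycle-edge and cross-cycle pairs are ruled out by parity. The distance-$\leq 2$ constraint then forces strong compatibility conditions inside $K$: any two weight-$2$ members must share an edge, so the weight-$2$ members of $K$ form either a ``sunflower'' with a common edge or a three-pair triangle $\{\{a,b\},\{a,c\},\{b,c\}\}$; a weight-$1$ and a weight-$2$ member must also overlap, so every weight-$2$ element contains the support of every weight-$1$ element, limiting the number of weight-$1$ members to at most $2$ whenever a weight-$2$ member is present. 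Combining these observations with the cactus bounds (a sunflower sharing a fixed bridge has $\leq b-1$ companions, and one sharing an edge $e^{*}\in C_j$ has $\leq n_j-1$ companions) yields $|K|\leq |E|+1$ in all cases with $|E|\geq 3$, and the border cases $|E|\leq 2$ are verified directly against the trivially small cut polytopes. Finally, $|E|+1\leq 2^{\lceil\log_2(|E|+1)\rceil}\leq 2|E|$ for every $|E|\geq 1$, completing the theorem.

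The main obstacle is the weight-$2$ case analysis: one needs the sunflower-or-triangle dichotomy for size-$2$ sets at pairwise Hamming distance at most $2$, and must then verify that the bridge-versus-cycle dichotomy of the cactus forbids mixing edge types within a single size-$2$ cut-set, thereby restricting which sunflowers and triangles are simultaneously realisable. The reduction to $\mathbf{v}(\emptyset)\in K$, the structural description of size-$\leq 2$ cut-sets, and the closing rounding to a power of two are all routine.
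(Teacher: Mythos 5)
Your proposal is correct, and it takes a genuinely different route from the paper. The paper never analyzes cliques directly: it bounds $\omega(\mathrm{CUT}(G))$ by the chromatic number $\chi(\mathrm{CUT}(G))$ and exhibits a proper coloring $\mathbf{v}(S)\mapsto\mathbf{v}(S)\,\mathrm{BRM}(|E|+1)$ over $\mathbb{F}_2$ using a binary representation matrix, the point being that the color difference of two adjacent vertices is a sum of at most two distinct nonzero rows and hence nonzero. You instead translate a clique by a fixed member (a Hamming isometry sending cut vectors to cut vectors), reduce to a family containing $\mathbf{0}$ with pairwise distance at most $2$ via Lemma~\ref{Lemma_cactus_adjacency}, and run the sunflower-or-triangle analysis on the weight-$2$ supports together with the containment constraint on weight-$1$ supports. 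Two remarks. First, your detour through the bridge/cycle structure of size-$1$ and size-$2$ cut-sets is not actually needed: the pure coding-theoretic count already gives $|K|\le|E|+1$ for $|E|\ge 3$ (star: $1+1+(|E|-1)$; triangle or a single weight-$2$ member: at most $4$; no weight-$2$ members: $1+|E|$), so the cactus structure enters only through Lemma~\ref{Lemma_cactus_adjacency}. Second, the trade-off between the two approaches is real: your bound $|E|+1$ is sharper than the paper's $2^{\lceil\log_2(|E|+1)\rceil}$ and is attained by $\mathrm{CUT}(C_3)=\mathrm{CUT}(K_3)$, whereas the paper's coloring argument is weaker here but transfers verbatim to the almost-tree~(2) case (Theorem~\ref{Theorem_almost_tree_clique}), where adjacency only forces distance at most $3$ and a direct case analysis of weight-$\le 3$ supports would be substantially messier. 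Do make the $|E|\le 2$ check explicit (a connected graph with at most two edges is a tree, so the clique number is $2$), since the bare distance-$\le 2$ bound of $|E|+1$ can fail as a coding statement when $|E|=2$.
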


\begin{proof}

	We bound the clique number of the 1-skeleton of a cut polytope from above by the \textit{chromatic number}, i.e. the smallest number of colors needed to color the vertices of $G$ so that no two adjacent vertices share the same color:
	\[\omega(\mathrm{CUT}(G)) \leq \chi (\mathrm{CUT}(G)).\]

	To color the 1-skeleton of a cut polytope for a cactus, we use the idea of binary representation matrices by Wan~\cite{Wan1997} (see also Linial et al.~\cite{Linial1988}).
	The \textit{binary representation matrix} $\mathrm{BRM}(k)$ is a $k \times \lceil \log_2 k \rceil$ matrix, in which row $i$ ($1 \leq i \leq k$) is the 0/1-vector of length $\lceil\log_2 k \rceil$ obtained from the binary expansion of $i$, adding leading zeroes as necessary. 
	For example, the 3th row of the matrix $\mathrm{BRM}(5)$ has the form $(0,1,1)$ since $3 = 0\cdot 2^{2} + 1 \cdot 2^{1} + 1 \cdot 2^{0}$.
	To each vertex $\mathbf{v}(S)$ of the cut polytope $\mathrm{CUT}(G)$ we assign the color 
	\[\mathrm{color}(\mathbf{v}(S)) = \mathbf{v}(S) \mathrm{BRM}(|E|+1),\]
	where all arithmetic operations are performed modulo 2.

	Let us show that this is a proper graph coloring.	
	Consider two distinct cuts $X,Y \subseteq V$ and the corresponding incidence vectors $\mathbf{v}(X), \mathbf{v}(Y)$, then
	\begin{equation} \label{Eq_BRM_difference}
		\mathbf{v}(X)\mathrm{BRM}(|E|+1) - \mathbf{v}(Y)\mathrm{BRM}(|E|+1) = (\mathbf{v}(X) - \mathbf{v}(Y))\mathrm{BRM}(|E|+1). 
	\end{equation}
	
	If the extreme points $\mathbf{v}(X)$ and $\mathbf{v}(Y)$ are adjacent, then they differ by no more than two coordinates. Therefore, equation (\ref{Eq_BRM_difference}) is either some row of the matrix $\mathrm{BRM}(|E|+1)$, or the difference of two rows. In both cases (\ref{Eq_BRM_difference}) cannot be a zero row, since all rows of the matrix $\mathrm{BRM}(|E|+1)$ are distinct and non-zero. Thus,
	\[\mathbf{v}(X)\mathrm{BRM}(|E|+1) \neq \mathbf{v}(Y)\mathrm{BRM}(|E|+1),\]
	and no two adjacent vertices share the same color.
	
	It remains to note that each color is uniquely represented by a binary string of length $\lceil \log_2 (|E|+1) \rceil$, from which we obtain the required upper bound
	\[\omega(\mathrm{CUT}(G)) \leq \chi (\mathrm{CUT}(G)) \leq 2^{\lceil\log_2\left(|E|+1\right)\rceil} \leq 2|E|.\]
	
\end{proof}

To estimate how tight the upper bound of the Theorem~\ref{Theorem_cactus_clique}, we consider the cycle graph $C_{n} = (V,E)$.
A cycle is split into two connected components if exactly two edges are removed from it. Then, by Theorem~\ref{Theorem_Barahona_adjacency}, the incidence vectors $\mathbf{v}(X)$ and $\mathbf{v}(Y)$ of the cuts $X,Y \subseteq V$ are adjacent in $\mathrm{CUT}(C_n)$ if and only if $|\delta(X \triangle Y)| = 2$ (or $\Delta(\mathbf{v}(X),\mathbf{v}(Y)) = 2$).

Consider a \textit{Hamming ball} of radius 1 centered around some vector $\mathbf{c} \in \{0,1\}^n$ with an odd number of 1's, i.e. set $\mathbf{v} \in \{0,1\}^n$ such that $\Delta (\mathbf{c},\mathbf{v}) = 1$ (see Fig.~\ref{Fig_Hamming_ball}). 
This ball contains $n$ binary vectors, the Hamming distance between any pair of which equals $2$, therefore the corresponding incidence vectors are pairwise adjacent in $\mathrm{CUT}(C_n)$. 
For example, we can consider the cuts $\{\{v_1\},\{v_1,v_2\},\{v_1,v_2,v_3\},\ldots,\{V\}\}$.

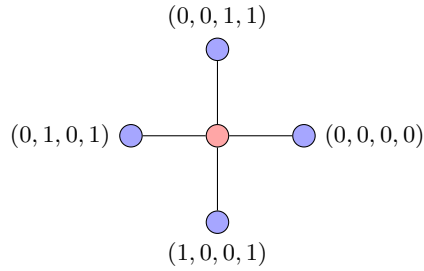
\begin{figure}[t]
	\centering
	\begin{tikzpicture}[scale=1.15]
		\node[circle,draw,inner sep=3pt,fill=blue!35,label=right:{\small $(0,0,0,0)$}] (0000) at (0:1) {};
		\node[circle,draw,inner sep=3pt,fill=blue!35,label=above:{\small $(0,0,1,1)$}] (0011) at (90:1) {};
		\node[circle,draw,inner sep=3pt,fill=blue!35,label=left:{\small $(0,1,0,1)$}] (0101) at (180:1) {};
		\node[circle,draw,inner sep=3pt,fill=blue!35,label=below:{\small $(1,0,0,1)$}] (1001) at (270:1) {};
		\node[circle,draw,inner sep=3pt,fill=red!35] (0001) at (0:0) {};
		
		\draw (0001) -- (0000);
		\draw (0001) -- (0011);
		\draw (0001) -- (0101);
		\draw (0001) -- (1001);
	
	\end{tikzpicture}
	\caption {A Hamming ball of radius $1$ centered around $(0,0,0,1)$ (red vertex)}
	\label {Fig_Hamming_ball}
\end{figure}

Thus, if $G = (V,E)$ is a cactus, then the clique number of the 1-skeleton of $\mathrm{CUT}(G)$ can be equal to $|E|$, while Theorem~\ref{Theorem_cactus_clique} provides an upper bound of $2 |E|$, which is quite tight.
Note also that the 1-skeleton of the cut polytope $\mathrm{CUT}(C_3) = \mathrm{CUT}(K_3)$ (Fig.~\ref{Fig_CUT(K_3)}) is a complete graph whose clique number is $4$.

\subsection{Almost trees (2)}

Now we consider almost trees (2), i.e. such graphs that there are at most $2$ edges not in a spanning tree of each biconnected component
(see Fig.~\ref{Fig_Almost_Tree}).

\begin{figure}[b]
	\centering
	\begin{tikzpicture}[scale=1.0]
		\node[circle,draw,inner sep=3pt,fill=blue!35] (00) at (0,0) {};
		\node[circle,draw,inner sep=3pt,fill=blue!35] (20) at (2,0) {};
		\node[circle,draw,inner sep=3pt,fill=blue!35] (30) at (3,0) {};
		\node[circle,draw,inner sep=3pt,fill=blue!35] (60) at (6,0) {};
		
		\node[circle,draw,inner sep=3pt,fill=blue!35] (11) at (1,1) {};
		\node[circle,draw,inner sep=3pt,fill=blue!35] (31) at (3,1) {};
		\node[circle,draw,inner sep=3pt,fill=blue!35] (51) at (5,1) {};
		\node[circle,draw,inner sep=3pt,fill=blue!35] (71) at (7,1) {};
		
		\node[circle,draw,inner sep=3pt,fill=blue!35] (02) at (0,2) {};
		\node[circle,draw,inner sep=3pt,fill=blue!35] (22) at (2,2) {};
		\node[circle,draw,inner sep=3pt,fill=blue!35] (42) at (4,2) {};
		\node[circle,draw,inner sep=3pt,fill=blue!35] (62) at (6,2) {};
		
		\node[circle,draw,inner sep=3pt,fill=blue!35] (82) at (8,2) {};
		
		\draw (00) -- (20) -- (11) -- (02) -- (00);
		\draw (00) -- (11);
		\draw (11) -- (31);
		\draw (31) -- (22) -- (42) -- (31);
		\draw (31) -- (30);
		\draw (31) -- (51);
		\draw (51) -- (60) -- (71) -- (82) -- (62) -- (51);
		\draw (62) -- (71);

	\end{tikzpicture}
	\caption {Example of an almost tree (2)}
	\label {Fig_Almost_Tree}
\end{figure}
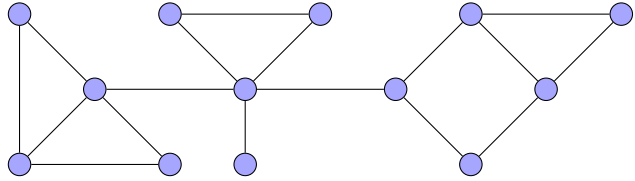

\begin{lemma} \label{Lemma_almost_tree_adjacency}
	Let $G = (V,E)$ be an almost tree (2). If for two cuts $X, Y \subseteq V$ the vertices $\mathbf{v}(X)$ and $\mathbf{v}(Y)$ are adjacent in $\mathrm{CUT}(G)$, then $|\delta(X \triangle Y)| \leq 3$.
\end{lemma}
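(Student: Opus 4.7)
The plan is to prove the contrapositive of the adjacency criterion: letting $S = X \triangle Y$ and $F = \delta(S)$, I would show that $|F| \geq 4$ forces $H_S = (V, E \setminus F)$ to have at least three connected components, so by Theorem~\ref{Theorem_Barahona_adjacency} the points $\mathbf{v}(X)$ and $\mathbf{v}(Y)$ cannot be adjacent. The strategy parallels Lemma~\ref{Lemma_cactus_adjacency} but has to accommodate the additional block type allowed by the almost tree~(2) hypothesis.

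First I would reduce to a per-block analysis. Decompose $G$ into its biconnected components (blocks) $B_1, \ldots, B_k$, glued along cut vertices, and set $F_i = F \cap E(B_i)$. A direct check gives $F_i = \delta_{B_i}(S \cap V(B_i))$, so each $F_i$ is itself an edge cut inside $B_i$. A short induction on the block-cut tree then yields the additive formula
\[
c(G - F) \;=\; 1 + \sum_{i=1}^{k} \bigl(c(B_i - F_i) - 1\bigr),
\]
with $c(\cdot)$ denoting the number of connected components. By the almost tree~(2) hypothesis every $B_i$ has cycle rank $\mu(B_i) \leq 2$, so a standard ear-decomposition argument leaves only three possibilities for $B_i$: a bridge (single edge), a simple cycle, or a \emph{theta graph}, i.e.\ two vertices joined by three internally vertex-disjoint paths.

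Next I would lower-bound $\phi_i := c(B_i - F_i) - 1$ using the Euler identity $c(H) = |V(H)| - |E(H)| + \mu(H)$ together with the fact that any non-trivial edge cut disconnects its block, so $c(B_i - F_i) \geq 2$ whenever $|F_i| \geq 1$. This gives $\phi_i = |F_i| \in \{0,1\}$ for a bridge, $\phi_i \geq \max(0, |F_i| - 1)$ with $|F_i|$ even for a simple cycle, and, since a theta block satisfies $|E(B_i)| - |V(B_i)| = 1$, the bound $\phi_i \geq \max(1, |F_i| - 2)$ for a theta block with $|F_i| \geq 1$.

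Finally, assuming $|F| \geq 4$, I would split into two cases. If at least two distinct blocks meet $F$, then since non-bridge blocks are $2$-edge-connected we have $\phi_i \geq 1$ for each contributing block, whence $\sum_i \phi_i \geq 2$. Otherwise all of $F$ lives in a single block $B$; the bridge case is ruled out by $|F_B| \leq 1$, while the cycle and theta cases give $\phi_B \geq |F| - 1 \geq 3$ and $\phi_B \geq |F| - 2 \geq 2$ respectively. In every case $c(G - F) \geq 3$, contradicting adjacency. The main obstacle is the theta case, where cuts can have odd size and a direct edge-by-edge enumeration along the three internally disjoint paths gets unwieldy; routing through the Euler identity together with the bound $\mu(B_i) \leq 2$ bypasses this cleanly.
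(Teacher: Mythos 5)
Your proof is correct, but it takes a genuinely different route from the paper's. The paper argues by contradiction: supposing $H_{X \triangle Y}$ had exactly two connected components, it takes spanning trees $T_1, T_2$ of those components and adjoins the at least four cut edges; the resulting connected spanning subgraph has at least $|V|+2$ edges, and all of its independent cycles lie in a single biconnected component (any two cut edges lie on a common cycle through the two trees), so some block of $G$ has three edges outside a spanning tree --- contradicting the almost tree (2) hypothesis. You instead prove the bound directly: you partition $F = \delta(X \triangle Y)$ over the blocks, use the additive component formula coming from the block--cut tree, classify blocks of cycle rank at most $2$ as bridges, cycles, or theta graphs, and lower-bound each block's contribution via the Euler identity together with the fact that a nonempty edge cut disconnects its block. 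Each step checks out: $F \cap E(B_i) = \delta_{B_i}(S \cap V(B_i))$ holds edge by edge, the gluing-at-a-cut-vertex induction gives the additive formula, and the case analysis for $|F| \geq 4$ yields $c(G - F) \geq 3$ in every branch, so Theorem~\ref{Theorem_Barahona_adjacency} rules out adjacency. What the paper's argument buys is brevity and an immediate generalization (for an almost tree ($k$), the same edge count gives $|\delta(X \triangle Y)| \leq k+1$ for adjacent cuts); what yours buys is a transparent, quantitative account of where the third component comes from, and it makes explicit the step the paper leaves implicit, namely why the excess cycle rank must concentrate in a single block.
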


\begin{proof}
	Similar to Lemma~\ref{Lemma_cactus_adjacency}, if $G = (V,E)$ is an almost tree (2), then for any cut $S \subseteq  V$, where $|\delta(S)| \geq 4$, the graph $H_{S} = (V,E \backslash \delta(S))$ has at least three connected components.
	
	Suppose to the contrary that the cut $S$ splits $G$ into exactly two connected components. Let us consider the spanning trees $T_1 = (V_1,E_1)$ and $T_2 = (V_1,E_2)$ of these components. Let's add to them edges from the cut-set $\delta(S)$. Then the resulting subgraph $G' = (V,E_1 \cup E_2 \cup \delta(S))$ of the original graph $G$ contains a biconnected component whose $3$ edges do not belong to the spanning tree (see Fig~\ref{Fig_Almost_Tree_Cut_4}). Thus, we have a contradiction.

	\begin{figure}[t]
		\centering
		\begin{tikzpicture}[scale=1.0]
			\node[circle,draw,inner sep=3pt,fill=red!35] (A1) at (0.5,2) {};
			\node[circle,draw,inner sep=3pt,fill=red!35] (A2) at (1.5,2) {};
			\node[circle,draw,inner sep=3pt,fill=red!35] (A3) at (2.5,2) {};

			\node[circle,draw,inner sep=3pt,fill=blue!35] (B1) at (0,0) {};
			\node[circle,draw,inner sep=3pt,fill=blue!35] (B2) at (1,0) {};
			\node[circle,draw,inner sep=3pt,fill=blue!35] (B3) at (2,0) {};
			\node[circle,draw,inner sep=3pt,fill=blue!35] (B4) at (3,0) {};
			
			\draw (A1) -- (A2) -- (A3);
			\draw [red] (1.5,2) ellipse (2 and 0.75);
			\node [red] at (3,2) {$T_1$};

			\draw (B1) -- (B2) -- (B3) -- (B4);
			
			\draw [thick,red] (A1) -- (B1);
			\draw [thick,red] (A2) -- (B2);
			\draw [thick,red] (A2) -- (B3);
			\draw [thick,red] (A3) -- (B4);
			
			\draw [blue,dashed] (1.5,0) ellipse (2.5 and 0.75);
			\node [blue] at (3.5,0) {$T_2$};

		\end{tikzpicture}
		\caption {The case with 4 edges in a cut-set separating two connected components}
		\label {Fig_Almost_Tree_Cut_4}
	\end{figure}
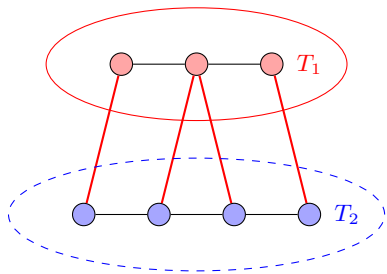
\end{proof}

\begin{theorem}
	Let $G=(V,E)$ be an almost tree (2), then
	\[\left\lfloor \frac{|V|}{3} \right\rfloor \leq d(\mathrm{CUT}(G)) \leq |V|-1.\]
\end{theorem}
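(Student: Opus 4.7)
The upper bound is inherited from any spanning tree $T$ of $G$: by Theorem~\ref{Theorem_Neto_Subgraph} the 1-skeleton of $\mathrm{CUT}(T)$, which is the Boolean hypercube $[0,1]^{|V|-1}$ of diameter $|V|-1$, embeds as a spanning subgraph of the 1-skeleton of $\mathrm{CUT}(G)$, so passing from $T$ to $G$ can only shorten distances.

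For the lower bound, my plan is to mirror the cactus argument of Theorem~\ref{Theorem_cactus_diameter}, substituting Lemma~\ref{Lemma_almost_tree_adjacency} for Lemma~\ref{Lemma_cactus_adjacency}. Recall that $\Delta(\mathbf{v}(X),\mathbf{v}(Y)) = |\delta(X\triangle Y)|$; Lemma~\ref{Lemma_almost_tree_adjacency} caps this Hamming distance at $3$ for every adjacent pair in the 1-skeleton, so by the triangle inequality any path of length $k$ connects incidence vectors at Hamming distance at most $3k$. To exploit this, I would fix any spanning tree $T$ of $G$ and use bipartiteness of $T$: a proper $2$-coloring provides a vertex subset $S \subset V$ such that every one of the $|V|-1$ edges of $T$ lies in $\delta_T(S)$, whence $|\delta_G(S)| \geq |V|-1$. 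The incidence vectors $\mathbf{v}(\emptyset)$ and $\mathbf{v}(S)$ are then at Hamming distance at least $|V|-1$, and any path between them in the 1-skeleton must have length at least
\[\left\lceil \frac{|V|-1}{3}\right\rceil \geq \left\lfloor \frac{|V|}{3}\right\rfloor.\]

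Since the structural input Lemma~\ref{Lemma_almost_tree_adjacency} is already established, I do not anticipate any additional obstacle; the argument is a direct adaptation of the cactus case and its correctness rests entirely on the ``$\leq 3$'' cap from that lemma, together with the Hamming-distance bookkeeping above.
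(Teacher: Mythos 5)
Your proof is correct and follows exactly the route the paper intends: the paper's own proof of this theorem is just the remark that the reasoning is ``completely similar'' to Theorem~\ref{Theorem_cactus_diameter}, and your argument is precisely that adaptation (bipartite spanning tree giving a cut of size at least $|V|-1$, the Hamming-distance cap of $3$ from Lemma~\ref{Lemma_almost_tree_adjacency}, and the check that $\lceil (|V|-1)/3\rceil \geq \lfloor |V|/3\rfloor$). In fact you spell out the bookkeeping more carefully than the paper does.
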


\begin{proof}
	The reasoning is completely similar to the proof of Theorem~\ref{Theorem_cactus_diameter}.
	
\end{proof}

\begin{theorem} \label{Theorem_almost_tree_clique}
	Let $G=(V,E)$ be an almost tree (2), then
	\[\omega(\mathrm{CUT}(G)) \leq 2^{\lceil\log_2\left(|E|\right)\rceil + 1} \leq 4 |E|.\]
\end{theorem}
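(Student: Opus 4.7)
The plan is to mirror the chromatic-number argument of Theorem~\ref{Theorem_cactus_clique}, adjusting the coloring matrix to cope with the weaker adjacency bound of Lemma~\ref{Lemma_almost_tree_adjacency}. I would again bound $\omega(\mathrm{CUT}(G)) \leq \chi(\mathrm{CUT}(G))$ and color each vertex $\mathbf{v}(S)$ by $\mathbf{v}(S)\,M \pmod{2}$ for a suitable binary matrix $M$ with $|E|$ rows and $m$ columns. Since by Lemma~\ref{Lemma_almost_tree_adjacency} adjacent cuts differ in at most three coordinates, $(\mathbf{v}(X) - \mathbf{v}(Y))M \pmod{2}$ becomes the XOR of one, two, or three distinct rows of $M$, and I need this XOR to be non-zero in all three cases.

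The cactus proof ensured non-vanishing for one or two rows merely by taking the rows of $\mathrm{BRM}(|E|+1)$ distinct and non-zero. The extra ingredient I would add is to require every row of $M$ to have \emph{odd Hamming weight}. Then for any non-empty subset $I$ of distinct rows with $|I| \leq 3$, the coordinate-sum parity of $\sum_{i \in I} r_i \pmod{2}$ equals $|I| \pmod{2}$: when $|I| \in \{1,3\}$ this parity is $1$, so the sum is automatically non-zero; when $|I| = 2$, distinctness of the rows already forces $r_i + r_j \neq \mathbf{0}$. In particular, no three distinct odd-weight vectors can XOR to zero, because their sum would have odd parity.

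Since exactly $2^{m-1}$ binary vectors of length $m$ have odd weight, I would take $m = \lceil \log_2 |E|\rceil + 1$, which provides $2^{m-1} = 2^{\lceil \log_2 |E|\rceil} \geq |E|$ distinct odd-weight rows, enough to assemble $M$. The coloring then uses at most $2^m = 2^{\lceil \log_2 |E|\rceil + 1} \leq 4|E|$ distinct colors, yielding the claimed bound on $\omega(\mathrm{CUT}(G))$.

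I do not foresee a real obstacle here: beyond restricting to odd-weight rows and making the parity observation for the three-row case, the argument follows the proof of Theorem~\ref{Theorem_cactus_clique} essentially verbatim. The only point requiring care is verifying that the parity of $|I|$ really does control the parity of the XOR, which is precisely what rules out any accidental cancellation among three odd-weight rows and so obviates the need for a deeper algebraic structure such as a binary error-correcting code.
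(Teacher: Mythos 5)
Your proposal is correct and is essentially the paper's own argument: the paper's modified binary representation matrix $\mathrm{BRM^{*}}(|E|)$ is precisely a matrix of $|E|$ distinct odd-weight rows of length $m=\lceil\log_2|E|\rceil+1$ (a parity bit is appended to each binary expansion), and the non-vanishing of the XOR of one, two, or three rows is justified by exactly the same distinctness-plus-parity observation you make. The only cosmetic difference is that you establish the existence of the rows by counting the $2^{m-1}$ odd-weight vectors, whereas the paper writes the matrix down explicitly.
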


\begin{proof}
	
	We again bound the clique number of the 1-skeleton of a cut polytope by the chromatic number. This time we are adapting the idea of hypercube coloring by Kim et al.~\cite{Kim2000}.
	
	Let $m = \lceil \log_2 k \rceil + 1$. The \textit{modified binary representation matrix} $\mathrm{BRM^{*}}(k)$ is a $k \times m$ matrix, in which row $i$ ($1 \leq i \leq k$) is the $0/1$-vector of the form $(b_i, n_i)$, where $b_i$ is the $(m-1)$-dimensional binary expansion of integer $i-1$, adding leading zeroes as necessary, and
	\[n_i = \begin{cases}
		1, &\text{if $b_i$ contains an even number of 1's};\\
		0, &\text{otherwise}.
	\end{cases}\]
	For example,
	\[\mathrm{BRM^{*}}(5) = \begin{pmatrix}
				0 & 0 & 0 & 1 \\
				0 & 0 & 1 & 0 \\
				0 & 1 & 0 & 0 \\
				0 & 1 & 1 & 1 \\
				1 & 0 & 0 & 0
			\end{pmatrix}.\]
			
	To each vertex $\mathbf{v}(S)$ of the cut polytope $\mathrm{CUT}(G)$ we assign the color 
	\[\mathrm{color}(\mathbf{v}(S)) = \mathbf{v}(S) \mathrm{BRM^{*}}(|E|),\]
	where all arithmetic operations are performed modulo 2. Again, note that we use $2^{m}$ colors in total.

	Consider two cuts $X,Y \subseteq V$ and the incidence vectors $\mathbf{v}(X), \mathbf{v}(Y)$, then
	\begin{equation} \label{Eq_BRM*_difference}
		\mathbf{v}(X)\mathrm{BRM^{*}}(|E|) - \mathbf{v}(Y)\mathrm{BRM^{*}}(|E|) = (\mathbf{v}(X) - \mathbf{v}(Y))\mathrm{BRM^{*}}(|E|). 
	\end{equation}
	
	If the extreme points $\mathbf{v}(X)$ and $\mathbf{v}(Y)$ are adjacent, then they differ by no more than 3 coordinates. Therefore, equation (\ref{Eq_BRM*_difference}) is the modulo 2 sum of at most 3 rows of the matrix $\mathrm{BRM^{*}}(|E|)$, that cannot be a zero row, since no two rows of $\mathrm{BRM^{*}}(|E|)$ are equal, and any odd number of rows of $\mathrm{BRM^{*}}(|E|)$ contain an odd number of 1's in total. Therefore, no two adjacent vertices share the same color. Thus, we obtain the required upper bound
	\[\omega(\mathrm{CUT}(G)) \leq \chi (\mathrm{CUT}(G)) \leq 2^{\lceil\log_2\left(|E|\right)\rceil + 1} \leq 4 |E|.\]

\end{proof}

As for the tightness of this upper estimate, the cut polytope $\mathrm{CUT}(C_n)$ of the cycle graph $C_n = (V,E)$ still has a clique of size $|E|$, while Theorem~\ref{Theorem_almost_tree_clique} provides an upper bound of $4 |E|$. This is not as good as the estimate of Theorem~\ref{Theorem_cactus_clique}, but it still has the same asymptotic behavior.

\section{Complete bipartite and $k$-partite graphs}

In this section, we consider cut polytopes for graphs with a large number of edges, in particular for complete bipartite and $k$-partite graphs. 
A \textit{$k$-partite graph} is a graph whose vertices can be partitioned into $k$ different independent sets $V_1$,$V_2$,\ldots,$V_k$. When $k = 2$ these are the \textit{bipartite graphs}, and when $k = 3$ they are called the \textit{tripartite graphs}.

The situation with the max-cut problem on a bipartite graph is a bit tricky. On the one hand, a \textit{simple max-cut} (all edges have the same weight, for example, equal to one) and a max-cut with non-negative edges have an obvious solution: the largest cut will be the one that separates parts $V_1$ and $V_2$ since the cut-set $\delta(V_1) = \delta(V_2)$ contains all the edges of the graph.
On the other hand, if the edges of the graph can take both positive and negative values, then the max-cut problem is NP-hard (McCormick et al.~\cite{McCormick2003}).
As for $k$-partite graphs, even a simple max-cut is NP-hard for tripartite graphs (Bodlaender and Jansen~\cite{Bodlaender1994}).

\subsection{Complete bipartite graphs}

The diameter of the 1-skeleton of a cut polytope of a complete bipartite graph was first considered by Neto~\cite{Neto2016}.

\begin{theorem}[Neto~\cite{Neto2016}]
	Let $K_{n_1,n_2} = (V_1, V_2, E)$ be a complete bipartite graph, where $n_1 = |V_1| \geq 2$ and $n_2 = |V_2| \geq 2$, then the diameter of the cut polytope $d(\mathrm{CUT}(K_{n_1,n_2})) = 2$.
\end{theorem}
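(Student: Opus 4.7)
The plan is to establish $d(\mathrm{CUT}(K_{n_1,n_2})) \geq 2$ and $d(\mathrm{CUT}(K_{n_1,n_2})) \leq 2$ separately. For the lower bound I would exhibit a concrete non-adjacent pair: take $X = \emptyset$ and $Y = V_1$. Since every edge of $K_{n_1,n_2}$ crosses the bipartition, $\delta(V_1) = E$, so $H_{X \triangle Y} = (V,\emptyset)$ has $n_1 + n_2 \geq 4$ connected components. By Theorem~\ref{Theorem_Barahona_adjacency}, $\mathbf{v}(X)$ and $\mathbf{v}(Y)$ are not adjacent, hence $d \geq 2$.

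For the upper bound, the first step is to read off an explicit adjacency criterion in $\mathrm{CUT}(K_{n_1,n_2})$ from Theorem~\ref{Theorem_Barahona_adjacency}. For any $S \subseteq V$ with $S_i = S \cap V_i$, the graph $H_S$ decomposes as the disjoint union of the complete bipartite graph on $S_1 \cup S_2$ and the complete bipartite graph on $(V_1 \setminus S_1) \cup (V_2 \setminus S_2)$. Counting components shows that $H_S$ has exactly two iff either (a) all four of $S_1,\, S_2,\, V_1 \setminus S_1,\, V_2 \setminus S_2$ are nonempty, or (b) one of $S$, $V \setminus S$ is a singleton.

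Given distinct cuts $X, Y$, write $D = X \triangle Y$ and $d_i = |D \cap V_i|$. If $D$ satisfies (a) or (b) then $X, Y$ are already adjacent. Otherwise, after swapping $D$ with $V \setminus D$ and interchanging the roles of $V_1, V_2$ if needed, one may assume $d_1 = 0$ and $d_2 \geq 2$. When $2 \leq d_2 \leq n_2 - 1$, I would take $Z = X \triangle \{v\}$ for any $v \in V_1$: then $X \triangle Z = \{v\}$ satisfies (b), while $Y \triangle Z = \{v\} \cup (D \cap V_2)$ has parameters $(1, d_2)$ and satisfies (a). When $d_2 = n_2$, i.e.\ $D = V_2$, I would take $Z = X \triangle \{v,w\}$ for some $v \in V_1$ and $w \in V_2$: then $X \triangle Z = \{v,w\}$ has parameters $(1,1)$, while $Y \triangle Z = \{v\} \cup (V_2 \setminus \{w\})$ has parameters $(1, n_2 - 1)$, and both satisfy (a) thanks to $n_1, n_2 \geq 2$. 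In either situation $X$ and $Y$ are joined by a path of length two.

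The main obstacle is the extremal sub-case $D = V_2$: here direct adjacency is blocked and a single-vertex modification only reduces the problem to a cut with parameters $(1, n_2)$, which still violates (a) as soon as $n_1 \geq 3$. It is precisely the two-vertex detour, which modifies one vertex in each part simultaneously, that pushes both endpoints of the length-two path into the regime of condition (a); the hypothesis $n_1, n_2 \geq 2$ is exactly what makes this detour feasible.
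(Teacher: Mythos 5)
Your proof is correct. Note that the paper does not actually prove this theorem --- it is quoted from Neto with no argument supplied --- but your case analysis is essentially the bipartite specialization of the proof the paper gives for the complete tripartite case: your condition (a) is the paper's Case~1, your sub-case $2 \leq d_2 \leq n_2-1$ matches its Case~2 (a one-vertex toggle producing an intermediate cut $Z$ adjacent to both), and your extremal sub-case $D = V_2$ matches its Case~3 (a simultaneous toggle in both parts). The only cosmetic difference is that you route the adjacency of $X$ and $Z$ through the singleton criterion (b) rather than through condition (a), which is equally valid under the hypothesis $n_1, n_2 \geq 2$.
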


Here we establish that the clique number of the 1-skeleton of a cut polytope of a complete bipartite graph is superpolynomial in dimension.

\begin{theorem} \label{Theorem_bipartite_clique}
	Let $K_{n_1,n_2} = (V_1, V_2, E)$ be a complete bipartite graph, where $n_1 = |V_1| \geq 2$ and $n_2 = |V_2| \geq 2$, then the clique number
	\[\omega(\mathrm{CUT}(K_{n_1,n_2})) \geq 2^{\min\{n_1,n_2\}-1}.\]
\end{theorem}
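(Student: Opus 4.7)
The plan is to exhibit an explicit clique of the required size in the $1$-skeleton, using the adjacency criterion of Theorem~\ref{Theorem_Barahona_adjacency} as the sole verification tool. By the symmetry between $V_1$ and $V_2$, I may assume $n_1 \leq n_2$, so that $\min\{n_1,n_2\} = n_1$. Write $[n_1] = \{1,2,\ldots,n_1\}$.

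The construction pairs each $u_i \in V_1$ with a distinct $v_i \in V_2$ (possible because $n_2 \geq n_1$); the remaining vertices $V_2'' := V_2 \setminus \{v_1,\ldots,v_{n_1}\}$ will be absorbed into the construction below. For every $A \subseteq [n_1]$, define the ``paired'' cut
\[
S_A = \{u_i : i \in A\} \cup \{v_i : i \in [n_1]\setminus A\}.
\]
Then pick a family $\mathcal{F}$ of $2^{n_1-1}$ subsets of $[n_1]$ by choosing exactly one element from each complementary pair $\{A, [n_1]\setminus A\}$---for instance, all $A$ containing the index $1$. I claim that $\{\mathbf{v}(S_A) : A \in \mathcal{F}\}$ is pairwise adjacent in $\mathrm{CUT}(K_{n_1,n_2})$, which gives the required clique of size $2^{n_1-1}$.

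To verify adjacency of $\mathbf{v}(S_A)$ and $\mathbf{v}(S_B)$ for distinct $A,B \in \mathcal{F}$, I would compute that $S_A \triangle S_B$ meets $V_1$ in $\{u_i : i \in A \triangle B\}$ and meets $V_2$ in $\{v_i : i \in A \triangle B\}$, and then argue that deleting $\delta(S_A \triangle S_B)$ from $K_{n_1,n_2}$ leaves precisely two connected components: one complete bipartite graph on the ``swapped'' indices $\{u_i : i \in A \triangle B\} \cup \{v_i : i \in A \triangle B\}$, and another complete bipartite graph on the ``unswapped'' indices $\{u_i : i \notin A \triangle B\} \cup \{v_i : i \in [n_1]\setminus(A\triangle B)\}$ together with all of $V_2''$. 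Connectedness of the first component needs $A \triangle B \neq \emptyset$, which follows from $A \neq B$; connectedness of the second, and specifically nonemptiness of its $V_1$-side, needs $A \triangle B \neq [n_1]$, which is exactly the non-complementarity condition built into $\mathcal{F}$. With both components connected, Theorem~\ref{Theorem_Barahona_adjacency} delivers adjacency.

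The one genuinely subtle point---and the reason the construction must both pair vertices and restrict $\mathcal{F}$---is the treatment of the surplus vertices $V_2''$ when $n_2 > n_1$. These vertices lie outside every $S_A$, hence outside every symmetric difference $S_A \triangle S_B$, and in $H_{S_A \triangle S_B}$ they are incident only to $\{u_i : i \notin A \triangle B\}$. If that $V_1$-side were empty (which is precisely what happens when $A$ and $B$ are complementary in $[n_1]$), then every vertex of $V_2''$ would form its own isolated component and the count would jump to $1 + |V_2''| \geq 3$, destroying adjacency. Forbidding complementary pairs in $\mathcal{F}$ neutralizes this threat while still yielding the optimal size $|\mathcal{F}| = 2^{n_1 - 1}$.
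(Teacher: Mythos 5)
Your proof is correct and follows essentially the same route as the paper's: a family of $2^{n_1-1}$ cuts indexed by one representative from each complementary pair of subsets of $[n_1]$, with pairwise adjacency verified via Theorem~\ref{Theorem_Barahona_adjacency} by showing that $H_{S_A \triangle S_B}$ splits into exactly two complete bipartite connected components. The only (immaterial) difference is that you place $A$ on the $V_1$-side and its complement on the $V_2$-side, whereas the paper uses the same subset on both sides; the pairwise symmetric differences, and hence the whole argument, coincide, and your explicit treatment of the surplus vertices $V_2''$ makes precise a point the paper passes over quickly.
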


\begin{proof}
	Let, without loss of generality, $n_1 = \min\{n_1,n_2\} \geq 2$.
	
	Consider the family $\mathcal {F}$ of all possible subsets $S \subseteq V_1$ such that from each pair $S$ and $V_1 \backslash S$ the family $\mathcal {F}$ contains only one subset.
	Then, clearly, $|\mathcal {F}| = 2^{n_1 - 1}$.
	
	Now for each $S \in \mathcal {F}$ we construct a symmetric cut $(S \subseteq V_1) \cup (S \subseteq V_2)$.
	Note that such a cut always exists, since $|V_1| \leq |V_2|$. 
	
	Let $X$ and $Y$ be two symmetric cuts of this type, then in each of the two parts $V_1$ and $V_2$ there is at least one vertex that belongs to $X \triangle Y$, and at least one vertex that does not belong to $X \triangle Y$,
	since no two subsets in $\mathcal {F}$ are the same, and we previously excluded set complements.
	
	However, $K_{n_1,n_2}$ is a complete bipartite graph, and any two vertices from different parts are connected by an edge. Therefore, both $X \triangle Y$ and its complement $V \backslash (X \triangle Y)$ form connected components (see Fig.~\ref{Fig_cut_bipartite}).
	Thus, by Theorem~\ref{Theorem_Barahona_adjacency}, for any two symmetric cuts $X, Y$ of this type (any two subsets of the family $\mathcal{F}$), the vertices $\mathbf{v}(X)$ and $\mathbf{v}(Y)$ of the cut polytope $\mathrm{CUT}(K_{n_1,n_2})$ are pairwise adjacent, which gives us a clique of the required size.
	
\end{proof}

	\begin{figure}[t]
	\centering
	\begin{tikzpicture}[scale=0.95]
		\node[circle,draw,inner sep=3pt,fill=red!35] (A1) at (0,0) {};
		\node[circle,draw,inner sep=3pt,fill=red!35] (A2) at (0,1) {};
		\node[circle,draw,inner sep=3pt,fill=red!35] (A3) at (0,2) {};
		\node[circle,draw,inner sep=3pt,fill=blue!35] (A4) at (0,3) {};
		
		\node[circle,draw,inner sep=3pt,fill=red!35] (B1) at (2,0) {};
		\node[circle,draw,inner sep=3pt,fill=red!35] (B2) at (2,1) {};
		\node[circle,draw,inner sep=3pt,fill=red!35] (B3) at (2,2) {};
		\node[circle,draw,inner sep=3pt,fill=blue!35] (B4) at (2,3) {};
		
		\draw (A1) -- (B1);
		\draw (A1) -- (B2);
		\draw (A1) -- (B3);
		\draw [thick,red] (A1) -- (B4);
		
		\draw (A2) -- (B1);
		\draw (A2) -- (B2);
		\draw (A2) -- (B3);
		\draw [thick,red] (A2) -- (B4);
		
		\draw (A3) -- (B1);
		\draw (A3) -- (B2);
		\draw (A3) -- (B3);
		\draw [thick,red] (A3) -- (B4);
		
		\draw [thick,red] (A4) -- (B1);
		\draw [thick,red] (A4) -- (B2);
		\draw [thick,red] (A4) -- (B3);
		\draw (A4) -- (B4);
		
		\node at (1,-0.75) {cut $X$};

		\begin{scope}[xshift=3.5cm]
			\node[circle,draw,inner sep=3pt,fill=blue!35] (A1) at (0,0) {};
			\node[circle,draw,inner sep=3pt,fill=red!35] (A2) at (0,1) {};
			\node[circle,draw,inner sep=3pt,fill=red!35] (A3) at (0,2) {};
			\node[circle,draw,inner sep=3pt,fill=red!35] (A4) at (0,3) {};
			
			\node[circle,draw,inner sep=3pt,fill=blue!35] (B1) at (2,0) {};
			\node[circle,draw,inner sep=3pt,fill=red!35] (B2) at (2,1) {};
			\node[circle,draw,inner sep=3pt,fill=red!35] (B3) at (2,2) {};
			\node[circle,draw,inner sep=3pt,fill=red!35] (B4) at (2,3) {};
			
			\draw (A1) -- (B1);
			\draw [thick,red] (A1) -- (B2);
			\draw [thick,red] (A1) -- (B3);
			\draw [thick,red] (A1) -- (B4);
			
			\draw [thick,red] (A2) -- (B1);
			\draw (A2) -- (B2);
			\draw (A2) -- (B3);
			\draw (A2) -- (B4);
			
			\draw [thick,red] (A3) -- (B1);
			\draw (A3) -- (B2);
			\draw (A3) -- (B3);
			\draw (A3) -- (B4);
			
			\draw [thick,red] (A4) -- (B1);
			\draw (A4) -- (B2);
			\draw (A4) -- (B3);
			\draw (A4) -- (B4);
			
			\node at (1,-0.75) {cut $Y$};
		\end{scope}

		\begin{scope}[xshift=7cm]
			\node[circle,draw,inner sep=3pt,fill=red!35] (A1) at (0,0) {};
			\node[circle,draw,inner sep=3pt,fill=blue!35] (A2) at (0,1) {};
			\node[circle,draw,inner sep=3pt,fill=blue!35] (A3) at (0,2) {};
			\node[circle,draw,inner sep=3pt,fill=red!35] (A4) at (0,3) {};
			
			\node[circle,draw,inner sep=3pt,fill=red!35] (B1) at (2,0) {};
			\node[circle,draw,inner sep=3pt,fill=blue!35] (B2) at (2,1) {};
			\node[circle,draw,inner sep=3pt,fill=blue!35] (B3) at (2,2) {};
			\node[circle,draw,inner sep=3pt,fill=red!35] (B4) at (2,3) {};
			
			\draw (A1) -- (B1);
			\draw [thick,red] (A1) -- (B2);
			\draw [thick,red] (A1) -- (B3);
			\draw (A1) -- (B4);
			
			\draw [thick,red] (A2) -- (B1);
			\draw (A2) -- (B2);
			\draw (A2) -- (B3);
			\draw [thick,red] (A2) -- (B4);
			
			\draw [thick,red] (A3) -- (B1);
			\draw (A3) -- (B2);
			\draw (A3) -- (B3);
			\draw [thick,red] (A3) -- (B4);
			
			\draw (A4) -- (B1);
			\draw [thick,red] (A4) -- (B2);
			\draw [thick,red] (A4) -- (B3);
			\draw (A4) -- (B4);
			
			\node at (1,-0.75) {cut $X \triangle Y$};
		\end{scope}

		\begin{scope}[xshift=10.5cm]
			\node[circle,draw,inner sep=3pt,fill=red!35] (A1) at (0,0) {};
			\node[circle,draw,inner sep=3pt,fill=blue!35] (A2) at (0,1) {};
			\node[circle,draw,inner sep=3pt,fill=blue!35] (A3) at (0,2) {};
			\node[circle,draw,inner sep=3pt,fill=red!35] (A4) at (0,3) {};
			
			\node[circle,draw,inner sep=3pt,fill=red!35] (B1) at (2,0) {};
			\node[circle,draw,inner sep=3pt,fill=blue!35] (B2) at (2,1) {};
			\node[circle,draw,inner sep=3pt,fill=blue!35] (B3) at (2,2) {};
			\node[circle,draw,inner sep=3pt,fill=red!35] (B4) at (2,3) {};
			
			\draw (A1) -- (B1);
			\draw (A1) -- (B4);
			
			\draw (A2) -- (B2);
			\draw (A2) -- (B3);
			
			\draw (A3) -- (B2);
			\draw (A3) -- (B3);
			
			\draw (A4) -- (B1);
			\draw (A4) -- (B4);
			
			\node at (1,-0.75) {graph $H_{X \triangle Y}$};
		\end{scope}
		
	\end{tikzpicture}
	\caption {Symmetric cuts of a complete bipartite graph $K_{4,4}$}
	\label {Fig_cut_bipartite}
\end{figure}
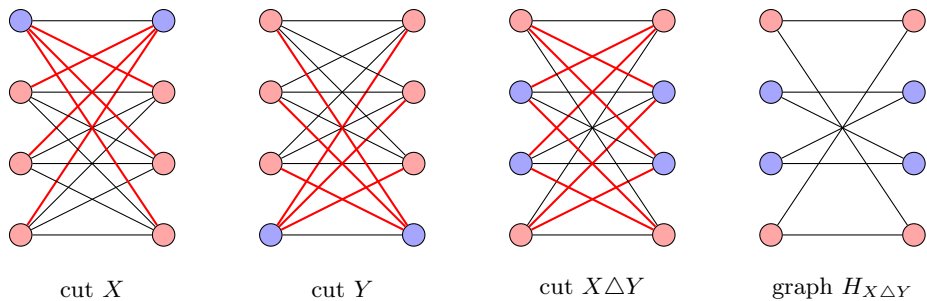

\subsection{Complete tripartite and $k$-partite graphs}

\begin{theorem}
	Let $K_{n_1,n_2,n_3} = (V_1, V_2, V_3, E)$ be a complete tripartite graph with the node set $V = V_1 \cup V_2 \cup V_3$, where $n_i = |V_i| \geq 2$, $1 \leq i \leq 3$, then the diameter of the cut polytope $d(\mathrm{CUT}(K_{n_1,n_2,n_3})) = 2$.
\end{theorem}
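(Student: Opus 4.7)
The plan is to establish the matching bounds $d \geq 2$ and $d \leq 2$ by combining Theorem~\ref{Theorem_Barahona_adjacency} with a simple structural fact: for any subset $A \subseteq V$, the subgraph of $K_{n_1,n_2,n_3}$ induced on $A$ is connected if and only if $|A| \leq 1$ or $A$ meets at least two of the parts $V_1, V_2, V_3$ (the latter producing a connected complete multipartite subgraph). Hence $H_D$ has exactly two components iff neither $D$ nor $V \setminus D$ lies inside a single part and has size at least $2$.

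For the lower bound I would exhibit two non-adjacent cuts. Take $X = V_1$ and $Y = V_2$; then $D := X \triangle Y = V_1 \cup V_2$, and its complement is $V_3$ with $n_3 \geq 2$. The graph $H_D$ therefore decomposes into the connected piece $K_{n_1,n_2}$ together with $n_3$ isolated vertices of $V_3$, giving at least three components. By Theorem~\ref{Theorem_Barahona_adjacency}, $\mathbf{v}(X)$ and $\mathbf{v}(Y)$ are not adjacent, so $d(\mathrm{CUT}(K_{n_1,n_2,n_3})) \geq 2$.

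For the upper bound, let $\mathbf{v}(X), \mathbf{v}(Y)$ be arbitrary distinct vertices and put $D = X \triangle Y$. If $H_D$ already has two components, then $d(\mathbf{v}(X),\mathbf{v}(Y)) = 1$ and we are done. Otherwise, using the structural fact and replacing $X$ by $V \setminus X$ if necessary (the same vertex of the polytope), I may assume $D \subseteq V_1$ with $|D| \geq 2$. I would then pick any $w \in V_2$ and set $Z = X \triangle \{w\}$, giving $X \triangle Z = \{w\}$ and $Z \triangle Y = D \cup \{w\}$. The first cut isolates only $w$, producing two components. The second induces the connected star $K_{|D|,1}$ on $D \cup \{w\}$, while its complement still contains vertices of $V_2 \setminus \{w\}$ and of $V_3$ (both non-empty since $n_2, n_3 \geq 2$), so it is also connected. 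Hence $\mathbf{v}(X)$–$\mathbf{v}(Z)$–$\mathbf{v}(Y)$ is a path of length $2$ in the $1$-skeleton.

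The main obstacle I anticipate is organizing the connectivity analysis cleanly: one has to catalogue exactly which cuts $D$ fail to yield two components, and then check that the single-vertex flip in another part repairs both halves of the symmetric difference simultaneously. The hypotheses $n_i \geq 2$ for all three parts are used essentially in both directions — in the lower bound to force $\geq 3$ components, and in the upper bound to guarantee that $w \in V_2$ exists, that $V_2 \setminus \{w\}$ and $V_3$ remain non-empty, and that the intermediate $Z$ is distinct from both $X$ and $Y$.
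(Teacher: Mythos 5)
Your proof is correct and rests on the same engine as the paper's: the Barahona--Mahjoub criterion (Theorem~\ref{Theorem_Barahona_adjacency}), the observation that in a complete multipartite graph a vertex set induces a connected subgraph iff it is a singleton or meets two parts, and the repair of a non-adjacent pair by an intermediate cut obtained from small vertex flips. The organization differs in a worthwhile way. The paper runs a three-case analysis on $X \triangle Y$: adjacency (Case 1), $X \triangle Y$ contained in a single part (Case 2), and the union of two parts contained in $X \triangle Y$ (Case 3), with a separate, somewhat more delicate construction of $Z$ in Case 3. You instead exploit the symmetry $\mathbf{v}(X) = \mathbf{v}(V \setminus X)$ to complement $X$ and fold Case 3 into Case 2, so that the only non-adjacent configuration is ``$X \triangle Y$ trapped in one part with at least two vertices,'' and you then repair it with a single-vertex flip $Z = X \triangle \{w\}$, $w$ in another part, rather than the paper's flip of one vertex in each of the two remaining parts. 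Both repairs are valid; yours gives a shorter case analysis and makes clear that only one extra part (not two) is needed to build the intermediate cut, while the paper's version keeps the cuts $X$, $Y$, $Z$ as literal vertex subsets without invoking the complementation identification. Your explicit lower-bound witness $X = V_1$, $Y = V_2$ is also a useful addition, since the paper leaves $d \geq 2$ implicit.
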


\begin{proof}
	We consider two cuts $X,Y \subseteq V$ and explore three possible cases. 
	
	\textit{Case 1.}
	If $V_i \cap (X \triangle Y) \neq \emptyset$ and $V_i \backslash (X \triangle Y) \neq \emptyset$ for at least two of the three parts $V_i$ (possibly different), then the extreme points $\mathbf{v}(X)$ and $\mathbf{v}(Y)$ are adjacent in the cut polytope $\mathrm{CUT}(K_{n_1,n_2,n_3})$. Indeed, both $X \triangle Y$ and its complement $V \backslash (X \triangle Y)$ form connected components (see Fig.~\ref{Fig_cut_bipartite}).
	
	\textit{Case 2.}
	Let for some two parts, without loss of generality $V_1$ and $V_2$, $V_1 \cap (X \triangle Y) = V_2 \cap (X \triangle Y) = \emptyset$. 
	Therefore, $X \triangle Y \subseteq V_3$, and if $|X \triangle Y| \geq 2$, then the symmetric difference does not form a connected component, and the vectors $\mathbf{v}(X)$ and $\mathbf{v}(Y)$ are not adjacent.
	
	We construct a cut $Z \subseteq V$ such that $|V_1 \cap (Z \triangle X)| = 1$ and $|V_2 \cap (Z \triangle X)| = 1$ (see Fig.~\ref{Fig_tripartite_diameter_empty_difference}). This is always possible since both $V_1$ and $V_2$ contain at least two vertices. It is enough to add or subtract a vertex from $X$ in each of the parts. We obtain that $Z \triangle X$ has exactly one vertex each in $V_1$ and $V_2$. Then, $Z \triangle X$ and $V \backslash (Z \triangle X)$ form connected components. The same is true for $Z \triangle Y$. Therefore, the vector $\mathbf{v}(Z)$ is adjacent to both $\mathbf{v}(X)$ and $\mathbf{v}(Y)$.
	
	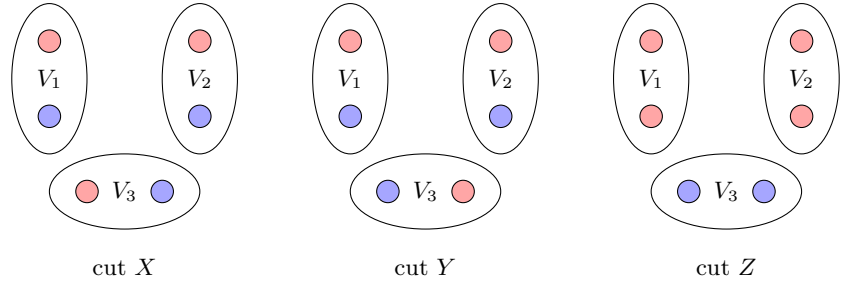
\begin{figure}[t]
		\centering
		\begin{tikzpicture}[scale=1.0]
			\node[circle,draw,inner sep=3pt,fill=blue!35] (A1) at (0,0) {};
			\node[circle,draw,inner sep=3pt,fill=red!35] (A2) at (0,1) {};
			\draw (0,0.5) ellipse (0.5 and 1);
			\node at (0,0.5) {$V_1$};
			
			\node[circle,draw,inner sep=3pt,fill=blue!35] (B1) at (2,0) {};
			\node[circle,draw,inner sep=3pt,fill=red!35] (B2) at (2,1) {};
			\draw (2,0.5) ellipse (0.5 and 1);
			\node at (2,0.5) {$V_2$};
			
			\node[circle,draw,inner sep=3pt,fill=red!35] (C1) at (0.5,-1) {};
			\node[circle,draw,inner sep=3pt,fill=blue!35] (C2) at (1.5,-1) {};
			\draw (1,-1) ellipse (1 and 0.5);
			\node at (1,-1) {$V_3$};

			\node at (1,-2) {cut $X$};

			\begin{scope}[xshift=4cm]
				\node[circle,draw,inner sep=3pt,fill=blue!35] (A1) at (0,0) {};
				\node[circle,draw,inner sep=3pt,fill=red!35] (A2) at (0,1) {};
				\draw (0,0.5) ellipse (0.5 and 1);
				\node at (0,0.5) {$V_1$};
				
				\node[circle,draw,inner sep=3pt,fill=blue!35] (B1) at (2,0) {};
				\node[circle,draw,inner sep=3pt,fill=red!35] (B2) at (2,1) {};
				\draw (2,0.5) ellipse (0.5 and 1);
				\node at (2,0.5) {$V_2$};
				
				\node[circle,draw,inner sep=3pt,fill=blue!35] (C1) at (0.5,-1) {};
				\node[circle,draw,inner sep=3pt,fill=red!35] (C2) at (1.5,-1) {};
				\draw (1,-1) ellipse (1 and 0.5);
				\node at (1,-1) {$V_3$};
				
				\node at (1,-2) {cut $Y$};
			\end{scope}

			\begin{scope}[xshift=8cm]
				\node[circle,draw,inner sep=3pt,fill=red!35] (A1) at (0,0) {};
				\node[circle,draw,inner sep=3pt,fill=red!35] (A2) at (0,1) {};
				\draw (0,0.5) ellipse (0.5 and 1);
				\node at (0,0.5) {$V_1$};
				
				\node[circle,draw,inner sep=3pt,fill=red!35] (B1) at (2,0) {};
				\node[circle,draw,inner sep=3pt,fill=red!35] (B2) at (2,1) {};
				\draw (2,0.5) ellipse (0.5 and 1);
				\node at (2,0.5) {$V_2$};
				
				\node[circle,draw,inner sep=3pt,fill=blue!35] (C1) at (0.5,-1) {};
				\node[circle,draw,inner sep=3pt,fill=blue!35] (C2) at (1.5,-1) {};
				\draw (1,-1) ellipse (1 and 0.5);
				\node at (1,-1) {$V_3$};
				
				\node at (1,-2) {cut $Z$};
			\end{scope}
			
		\end{tikzpicture}
		\caption {Construction of the cut $Z$ in the case of $V_1 \cap (X \triangle Y) = V_2 \cap (X \triangle Y) = \emptyset$}
		\label {Fig_tripartite_diameter_empty_difference}
	\end{figure}

	\textit{Case 3.}
	Now let that for some two parts, without loss of generality $V_1$ and $V_2$, $V_1 \backslash (X \triangle Y) = \emptyset$ and $V_2 \backslash (X \triangle Y) = \emptyset$.
	Then both cuts $X$ and $Y$ cannot simultaneously contain all vertices from $V_1$ and $V_2$. Without loss of generality, we assume that $X \cap V_1 \neq V_1$ and $Y \cap V_2 \neq V_2$. 
	
	Let us choose vertices $v_1 \in V_1 \backslash X$ and $v_2 \in V_2 \backslash Y$ and construct a cut $Z \subseteq V$ (see Fig.~\ref{Fig_tripartite_diameter_complete_difference}) such that
	\[Z \cap V_1 = (X \cap V_1) \cup \{v_1\} \text{ and }  Z \cap V_2 = (Y \cap V_2) \cup \{v_2\}.\]
	It only remains to note that
	\begin{itemize}
		\item $V_1 \cap (X \triangle Z) = \{v_1\}$ and $V_2 \cap (X \triangle Z) = V_2 \backslash \{v_2\}$, hence the vectors $\mathbf{v}(X)$ and $\mathbf{v}(Z)$ are adjacent;
		\item $V_1 \cap (Y \triangle Z) = V_1 \backslash \{v_1\}$ and $V_2 \cap (Y \triangle Z) = \{v_2\}$, hence the vectors $\mathbf{v}(Y)$ and $\mathbf{v}(Z)$ are adjacent.
	\end{itemize} 
	
	\begin{figure}[t]
		\centering
		\begin{tikzpicture}[scale=1.0]
			\node[circle,draw,inner sep=3pt,fill=blue!35] (A1) at (0,0) {};
			\node[circle,draw,inner sep=3pt,fill=blue!35] (A2) at (0,1) {};
			\draw (0,0.5) ellipse (0.5 and 1);
			\node at (0,0.5) {$V_1$};
			
			\node[circle,draw,inner sep=3pt,fill=red!35] (B1) at (2,0) {};
			\node[circle,draw,inner sep=3pt,fill=red!35] (B2) at (2,1) {};
			\draw (2,0.5) ellipse (0.5 and 1);
			\node at (2,0.5) {$V_2$};
			
			\node[circle,draw,inner sep=3pt,fill=blue!35] (C1) at (0.5,-1) {};
			\node[circle,draw,inner sep=3pt,fill=blue!35] (C2) at (1.5,-1) {};
			\draw (1,-1) ellipse (1 and 0.5);
			\node at (1,-1) {$V_3$};
			
			\node at (1,-2) {cut $X$};

			\begin{scope}[xshift=4cm]
				\node[circle,draw,inner sep=3pt,fill=red!35] (A1) at (0,0) {};
				\node[circle,draw,inner sep=3pt,fill=red!35] (A2) at (0,1) {};
				\draw (0,0.5) ellipse (0.5 and 1);
				\node at (0,0.5) {$V_1$};
				
				\node[circle,draw,inner sep=3pt,fill=blue!35] (B1) at (2,0) {};
				\node[circle,draw,inner sep=3pt,fill=blue!35] (B2) at (2,1) {};
				\draw (2,0.5) ellipse (0.5 and 1);
				\node at (2,0.5) {$V_2$};
				
				\node[circle,draw,inner sep=3pt,fill=blue!35] (C1) at (0.5,-1) {};
				\node[circle,draw,inner sep=3pt,fill=blue!35] (C2) at (1.5,-1) {};
				\draw (1,-1) ellipse (1 and 0.5);
				\node at (1,-1) {$V_3$};
				
				\node at (1,-2) {cut $Y$};
			\end{scope}

			\begin{scope}[xshift=8cm]
				\node[circle,draw,inner sep=3pt,fill=red!35] (A1) at (0,0) {};
				\node[circle,draw,inner sep=3pt,fill=blue!35] (A2) at (0,1) {};
				\draw (0,0.5) ellipse (0.5 and 1);
				\node at (0,0.5) {$V_1$};
				
				\node[circle,draw,inner sep=3pt,fill=red!35] (B1) at (2,0) {};
				\node[circle,draw,inner sep=3pt,fill=blue!35] (B2) at (2,1) {};
				\draw (2,0.5) ellipse (0.5 and 1);
				\node at (2,0.5) {$V_2$};
				
				\node[circle,draw,inner sep=3pt,fill=blue!35] (C1) at (0.5,-1) {};
				\node[circle,draw,inner sep=3pt,fill=blue!35] (C2) at (1.5,-1) {};
				\draw (1,-1) ellipse (1 and 0.5);
				\node at (1,-1) {$V_3$};
				
				\node at (1,-2) {cut $Z$};
			\end{scope}
			
		\end{tikzpicture}
		\caption {Construction of the cut $Z$ in the case of $V_1 \backslash (X \triangle Y) = \emptyset$ and $V_2 \backslash (X \triangle Y) = \emptyset$}
		\label {Fig_tripartite_diameter_complete_difference}
	\end{figure}
	
	Thus, for any pair of non-adjacent vertices $\mathbf{v}(X)$ and $\mathbf{v}(Y)$ of a cut polytope $\mathrm{CUT}(K_{n_1,n_2,n_3})$ of a complete tripartite graph, we can construct a third vertex $\mathbf{v}(Z)$ adjacent to both. Therefore, $d(\mathrm{CUT}(K_{n_1,n_2,n_3})) = 2$.

	\begin{theorem}
		Let $K_{n_1,\ldots,n_k} = (V_1, \ldots, V_k, E)$ be a complete $k$-partite graph with the node set $V = V_1 \cup \ldots \cup V_k$, where $n_i = |V_i| \geq 2$, $1 \leq i \leq k$, then the diameter of the cut polytope $d(\mathrm{CUT}(K_{n_1,\ldots,n_k})) = 2$.
	\end{theorem}
	
	\begin{proof}
		The cases of $k=2$ and $k=3$ have already been considered above, and for the remaining $k$ the proof is similar to complete tripartite graphs.
		
	\end{proof}

\end{proof}

\begin{theorem}
	Let $K_{n_1,n_2,\ldots,n_k} = (V_1, V_2, \ldots, V_k, E)$ be a complete $k$-partite graph, where $n_1 = |V_1| \geq n_2 = |V_2| \geq \ldots \geq n_k = |V_k|$, then the clique number
	\[\omega(\mathrm{CUT}(K_{n_1,n_2,\ldots,n_k})) \geq 2^{n_2 - 1}.\]
\end{theorem}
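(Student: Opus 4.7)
The plan is to mirror the bipartite construction of Theorem~\ref{Theorem_bipartite_clique}, but using the \emph{second-largest} part $V_2$ as the index set. Specifically, I would form the family $\mathcal{F}$ of subsets of $V_2$ by selecting one representative from each pair $\{S, V_2 \setminus S\}$, so that $|\mathcal{F}| = 2^{n_2-1}$. Since $n_1 \geq n_2$, I fix an injection $\phi: V_2 \to V_1$ and associate to each $S \in \mathcal{F}$ the cut $X_S = S \cup \phi(S) \subseteq V_1 \cup V_2$. The aim is to show that $\{\mathbf{v}(X_S) : S \in \mathcal{F}\}$ is a clique of the required size in the 1-skeleton of $\mathrm{CUT}(K_{n_1, \ldots, n_k})$.

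Verifying pairwise adjacency reduces, by Theorem~\ref{Theorem_Barahona_adjacency}, to showing that for distinct $S_1, S_2 \in \mathcal{F}$ both $X_{S_1} \triangle X_{S_2}$ and its complement in $V$ induce connected subgraphs. The key observation is that in a complete $k$-partite graph any vertex subset meeting at least two different parts induces a connected subgraph. Now
\[X_{S_1} \triangle X_{S_2} = (S_1 \triangle S_2) \cup \phi(S_1 \triangle S_2)\]
meets $V_2$ (nonempty since $S_1 \neq S_2$) and $V_1$ (nonempty since $\phi$ is injective), so it is connected. For the complement, the exclusion of complementary pairs in $\mathcal{F}$ gives $S_1 \triangle S_2 \neq V_2$, so $V_2 \setminus (S_1 \triangle S_2) \neq \emptyset$; combined with all of $V_3 \cup \ldots \cup V_k$ when $k \geq 3$, or with $V_1 \setminus \phi(S_1 \triangle S_2)$, whose nonemptiness follows from $|\phi(S_1 \triangle S_2)| = |S_1 \triangle S_2| \leq n_2 - 1 < n_1$ when $k = 2$, the complement also meets at least two parts and is therefore connected.

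It remains to argue that the $2^{n_2 - 1}$ cuts produce $2^{n_2 - 1}$ distinct extreme points of $\mathrm{CUT}(K_{n_1,\ldots,n_k})$, i.e., no two of the $X_S$ are equal or complementary in $V$. Distinctness is immediate because $S$ is recoverable from $X_S$ via $X_S \cap V_2 = S$. Non-complementarity is automatic when $k \geq 3$, since $V_3 \subseteq V \setminus X_S$ for every $S$, whereas $X_{S'} \cap V_3 = \emptyset$; for $k = 2$ it is precisely why we kept only one representative per complementary pair in $\mathcal{F}$. The main obstacle I anticipate is the bipartite edge case: ruling out $\phi(S_1 \triangle S_2) = V_1$ requires both $n_1 \geq n_2$ and the non-complementarity bound $|S_1 \triangle S_2| \leq n_2 - 1$, and it is this interplay that makes the second-largest part, rather than the largest, the natural choice for indexing $\mathcal{F}$.
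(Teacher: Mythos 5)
Your proposal is correct and follows essentially the same route as the paper: the same family $\mathcal{F}$ of subsets of $V_2$ modulo complementation, the same symmetric cuts duplicated into $V_1$ via an injection, and the same adjacency check showing that $X \triangle Y$ and its complement each meet enough parts to be connected. You simply make explicit the injection $\phi$ and the distinctness and complement-nonemptiness verifications that the paper leaves implicit.
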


\begin{proof}
	The idea is to limit the consideration to only the parts $V_1$ and $V_2$, then repeat the construction from the proof of the Theorem~\ref{Theorem_bipartite_clique}.
	
	We again consider a family $\mathcal {F}$ of all possible subsets $S \subseteq V_2$ such that from each pair $S$ and $V_2 \backslash S$ the family $\mathcal {F}$ contains only one subset.
	For each $S \in \mathcal {F}$ we construct a symmetric cut $(S \subseteq V_1) \cup (S \subseteq V_2)$. 
	For any two symmetric cuts $X, Y$ of this type, both $X \triangle Y$ and $V \backslash (X \triangle Y)$ form connected components, and the vectors $\mathbf{v}(X)$ and $\mathbf{v}(Y)$ of the cut polytope are pairwise adjacent, which gives us a clique of the size $|\mathcal {F}| = 2^{n_2 - 1}$.

\end{proof}

\section{Conclusion}

The results of the research are summarized in Table~\ref{Table_pivot_1-skeleton_cut_polytopes}.
We can see that as the number of edges in the graph $G$ grows, the diameter of the 1-skeleton of the cut polytope $\mathrm{CUT}(G)$ decreases, and the clique number increases.
This is expected, since if $G'$ is a subgraph of $G$, then 1-skeleton of $\mathrm{CUT}(G')$ is a subgraph of the 1-skeleton of $\mathrm{CUT}(G)$.

However, the clique number of the 1-skeleton undergoes a qualitative leap as we move from trees and almost trees to complete bipartite graphs. On the one hand, the max-cut problem is polynomially solvable for trees, cacti, and almost trees (2). The clique number of the 1-skeleton here is linear in the dimension. On the other hand, for complete bipartite and $k$-partite graphs, the max-cut problem is NP-hard, and the clique number of the 1-skeleton of the corresponding cut polytopes is superpolynomial in the dimension.

\begin{credits}
\subsubsection{\ackname} 
The research is supported by the Yaroslavl region grant No. 4-np/2023 and the P.G. Demidov Yaroslavl State University Project VIP-016.


\subsubsection{\discintname}
The author has no competing interests to declare that are relevant to the content of this article.

\end{credits}
%
%
%
 \bibliographystyle{splncs04}
 \bibliography{Nikolaev_arXiv_2024}

\end{document}